  \newenvironment{proof}{\vspace{1ex}\noindent{\bf Proof.}}{\hspace*{\fill}$\blacksquare$\vspace{1ex}}
  \newenvironment{proofof}[1]{\vspace{1ex}\noindent{\bf Proof of #1.}}{\hspace*{\fill}$\blacksquare$\vspace{1ex}}
  \newtheorem{theorem}{Theorem} %%[section]
  \newtheorem{lemma} [theorem] {Lemma}%[section]
  \newtheorem{proposition} [theorem] {Proposition}%[section]
\newcommand{\Pcal}[0]{\ensuremath{{\mathcal P}}}
\newcommand{\Scal}[0]{\ensuremath{{\mathcal S}}}
\newcommand{\Tcal}[0]{\ensuremath{{\mathcal T}}}
\newcommand{\Xcal}[0]{\ensuremath{{\mathcal X}}}
\newcommand{\Ycal}[0]{\ensuremath{{\mathcal Y}}}
\newcommand{\Zcal}[0]{\ensuremath{{\mathcal Z}}}
\newcommand{\eR}[0]{\ensuremath{\mathbb R}}
\newcommand{\Haa}[0]{\ensuremath{\mathbb H}}
\newcommand{\eN}[0]{\ensuremath{ \mathbb N}}
\newcommand{\Zed}[0]{\ensuremath{ \mathbb Z}}
\newcommand{\Dee}[0]{\ensuremath{\mathbb D}}
\newcommand{\norm}[1]{\ensuremath{\|#1\|}}
\newcommand{\Pee}[0]{\ensuremath{{\mathbb P}}}
\newcommand{\isd}[0]{\hspace{.2ex} \raisebox{-.1ex}{$=$} \hspace{-1.5ex} 
\raisebox{1ex}{{$\scriptstyle d$}} \hspace{.8ex} }
\DeclareMathOperator{\dist}{dist}
\DeclareMathOperator{\diam}{diam}
\DeclareMathOperator{\cross}{cross}
\DeclareMathOperator{\area}{area}
\definecolor{orange}{RGB}{255,127,0}
\definecolor{pink}{RGB}{255,150,150}
\DeclareMathOperator{\arcsinh}{arcsinh}
\newcommand{\nearby}{\text{\textup{local}}}
\DeclareMathOperator{\closed}{closed}
\newcommand{\ballR}[0]{\ensuremath{B_{\eR^2}}}
\newcommand{\ballH}[0]{\ensuremath{B_{\Haa^2}}}
\newcommand{\cellR}[0]{\ensuremath{C_{\eR^2}}}
\newcommand{\cellH}[0]{\ensuremath{C_{\Haa^2}}}
\newcommand{\distH}[0]{\ensuremath{\dist_{\Haa^2}}}
\newcommand{\areaH}[0]{\ensuremath{\area_{\Haa^2}}}
\newcommand{\areaR}[0]{\ensuremath{\area_{\eR^2}}}
\newcommand{\Zcalb}[0]{\ensuremath{\Zcal_{\text{b}}}}
\newcommand{\Zcalw}[0]{\ensuremath{\Zcal_{\text{w}}}}
\newcommand{\Zcalbtil}[0]{\ensuremath{\tilde{\Zcalb}}}
\newcommand{\Zcalwtil}[0]{\ensuremath{\tilde{\Zcalw}}}
\newcommand{\Zcaltil}[0]{\ensuremath{\tilde{\Zcal}}}
\newcommand{\pnew}[0]{\ensuremath{p_{\text{new}}}}
\newcommand{\diamR}[0]{\ensuremath{\diam_{\eR^2}}}
\begin{document}

\title{The critical probability for Voronoi percolation \\ in the hyperbolic plane tends to $1/2$}

\author{%
Benjamin T.~Hansen\thanks{Bernoulli Institute, % for Mathematics, CS and AI, 
Groningen University, The Netherlands. 
E-mail: {\tt b.t.hansen@rug.nl}.  Supported by the Netherlands Organisation for Scientific Research (NWO) 
under project no. 639.032.529.}
\and 
Tobias M\"uller\thanks{Bernoulli Institute, % for Mathematics, CS and AI, 
Groningen University, The Netherlands. 
E-mail: {\tt tobias.muller@rug.nl}. Supported in part by the Netherlands Organisation for Scientific Research (NWO) 
under project nos 612.001.409 and 639.032.529.}%
}

\date{\today}

\maketitle

\begin{abstract}
We consider percolation on the Voronoi tessellation generated by a homogeneous Poisson point process on the hyperbolic plane.  
We show that the critical probability for the existence of an infinite cluster tends to $1/2$ as the intensity of the 
Poisson process tends to infinity.
This confirms a conjecture of Benjamini and Schramm~\cite{benjamini2001percolation}.
\end{abstract}

\section{Introduction and statement of result}

In this paper, we will consider percolation on the Voronoi tessellation generated by a homogeneous Poisson point process 
on the hyperbolic plane $\Haa^2$. That is, with each point of a constant intensity Poisson process on $\Haa^2$ we associate 
its Voronoi cell -- which is the set of all points of the hyperbolic plane that are closer to it than to any other point 
of the Poisson process -- and we colour each cell black with probability $p$ and white with probability $1-p$, independently 
of the colours of all other cells.
We refer the reader to Section~\ref{sec:preliminaries} for detailed definitions and some background on the hyperbolic plane, 
hyperbolic Poisson point processes and their Voronoi tessellations.
Figure~\ref{fig:perc} shows a computer simulation of Voronoi percolation in the hyperbolic plane, rendered
in the Poincar\'e disk representation of $\Haa^2$. (Note that in all depictions of hyperbolic Voronoi percolation 
in the paper the black cells are rendered light blue to aid visibility.)
\begin{figure}[h!]
	\center{\includegraphics[width=.8\textwidth ]{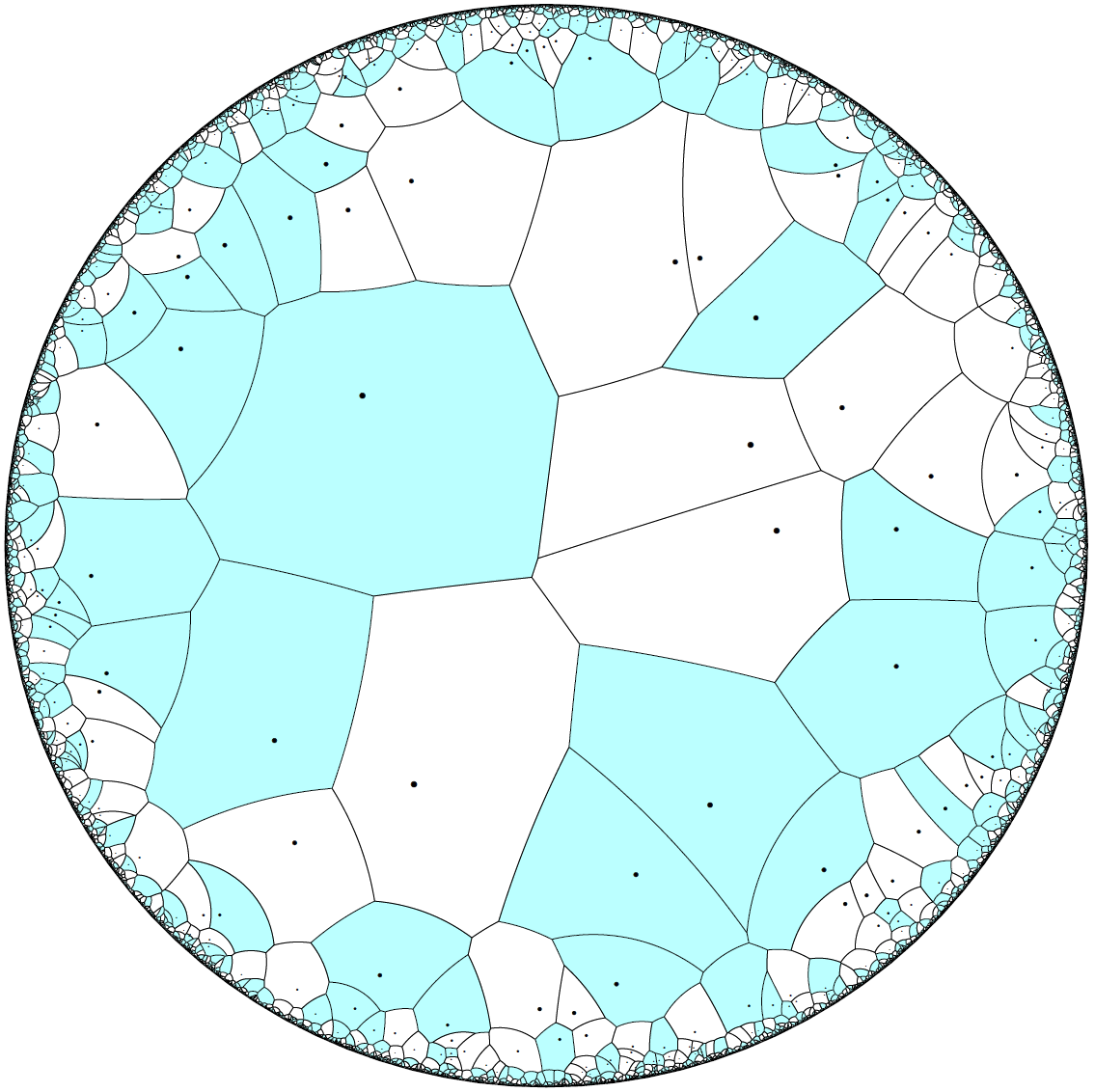} }
	\caption{\label{fig:perc} Computer simulation of Voronoi percolation in the hyperbolic plane, shown 
	in the Poincar\'e disk representation of $\Haa^2$. ($p=1/2, \lambda=1$)  }
\end{figure}%
We say that \emph{percolation} occurs if there is an infinite connected cluster of black cells. 
For each intensity $\lambda>0$ of the underlying Poisson process, the \emph{critical probability} is defined as 

$$ p_c(\lambda):=\inf\{p:\Pee_{\lambda,p}(\text{percolation})>0\}. $$

In an influential paper, Benjamini and Schramm~\cite{benjamini2001percolation} showed that $0<p_c(\lambda)<1/2$ for all $\lambda$
and they conjectured that $p_c(\lambda)$ tends to $1/2$ as $\lambda\to\infty$.
Here we prove their conjecture.

\begin{theorem}\label{thm:main}
$\displaystyle \lim_{\lambda\to\infty}p_c(\lambda)=1/2$.
\end{theorem}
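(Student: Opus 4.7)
The upper bound $\limsup_{\lambda\to\infty}p_c(\lambda)\leq 1/2$ is immediate from the Benjamini--Schramm inequality $p_c(\lambda)<1/2$, so the real work is the matching lower bound $\liminf_{\lambda\to\infty}p_c(\lambda)\geq 1/2$. Fixing $\eps>0$ and $p=1/2-\eps$, I would aim to show that for all sufficiently large $\lambda$ there is no infinite black cluster almost surely. Since the Poisson--Voronoi percolation is invariant and ergodic under the isometry group of $\Haa^2$, and the event ``an infinite black cluster exists'' is isometry-invariant, it suffices to show that the black cluster of a fixed reference point $o$ is a.s.\ finite. By monotone convergence this reduces to the one-arm decay $\Pee_{\lambda,p}(o\leftrightarrow\partial\ballH(o,r)\text{ through black cells})\to 0$ as $r\to\infty$.

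The key input is that at intensity $\lambda$ a typical Voronoi cell has hyperbolic diameter $O(1/\sqrt{\lambda})$, so the tessellation restricted to a fixed-size hyperbolic ball converges, after a natural chart-map coupling, to a Euclidean Voronoi tessellation of a Euclidean disc of comparable radius. Combined with (i)~the Bollob\'as--Riordan theorem $p_c^{\mathrm{Eucl}}=1/2$, and (ii)~the quantitative RSW and sharpness-of-phase-transition bounds showing that at $p=1/2-\eps$ the left--right crossing probability of a Euclidean square of side $k$ decays exponentially in $k$, this should yield a \emph{local estimate}: for every $\delta>0$ there is a hyperbolic ``box'' of diameter $L$ such that, for $\lambda$ large, its black-crossing probability between two opposite sides is at most $\delta$, with $\delta$ depending on $L$ in a quantitatively explicit (exponentially decaying) way.

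The main obstacle is passing from this local estimate to the one-arm decay. In $\eR^2$ one would invoke a Peierls-type dual-circuit argument, but in $\Haa^2$ the exponential boundary growth of balls means that the annulus around $o$ of radii $r$ and $r+L$ must be covered by $\Theta(e^r)$ translates of the local box, so a crude union bound over these translates blows up with $r$ and fails. My strategy would be to exploit the exponential decay in $L$ of the local crossing probability: choose $L=L(r)$ growing at least linearly in $r$ so that $e^r\cdot\delta(L)\to 0$. The tension is that the Euclidean coupling to the hyperbolic tessellation is only accurate on scales small compared to the curvature radius, so pushing $L\to\infty$ must be compensated by pushing $\lambda$ to infinity even faster; carefully managing this tradeoff--and treating the cells which straddle neighbouring boxes via a finite-range truncation so that distinct boxes are approximately independent--will be the main technical difficulty.
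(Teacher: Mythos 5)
Your upper bound and your local estimate are fine, but the globalization step --- which you yourself flag as the main difficulty --- contains a genuine gap, and the fix you propose cannot work. You suggest choosing the box scale $L=L(r)$ growing with $r$ so that $e^{r}\cdot\delta(L)\to 0$, compensating by ``pushing $\lambda$ to infinity even faster.'' But the theorem requires a statement for a \emph{fixed} (large) $\lambda$, valid for all $r$; once $\lambda$ is fixed you cannot let $L(r)\to\infty$, because the accuracy of the Euclidean coupling at scale $L$ is governed by the curvature, not by the intensity: a hyperbolic ball of radius $L$ is simply not close to a Euclidean disc once $L$ is of order $1$, no matter how many Poisson points it contains. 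So the exponential-in-$L$ decay of Euclidean crossing probabilities cannot be imported at growing hyperbolic scales, and the union bound over the $\Theta(e^{r})$ boxes of a fixed scale $L$ in the annulus indeed blows up. The annulus/blocking-circuit framing is intrinsically unsuited to hyperbolic geometry (any circuit separating $\ballH(o,r)$ from infinity has length of order $e^{r}$), and no choice of parameters rescues it.

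The paper resolves this by changing the combinatorics, not the parameters. It works in the dual (no infinite \emph{white} cluster at $p>1/2$, which by colour swap is your statement), surrounds each triangle $T$ of the fixed $(7,7,7)$-triangulation $\Tcal$ with six rectangles whose simultaneous black crossings produce a black circuit separating $T$ from infinity, and declares $T$ closed when this happens together with a locality event. This yields a $k$-independent site percolation on the bounded-degree graph of $\Tcal$, and an infinite white Voronoi path forces an infinite \emph{path of open triangles}. The union bound is then over coarse-grained paths --- $d^{\ell}$ paths of length $\ell$, each contributing $p_1^{\ell/(1+d^{k})}$ --- rather than over all boxes in an annulus, so one only needs the per-triangle open probability below the fixed threshold $d^{-(1+d^{k})}$ of Lemma~\ref{lem:kindept}. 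That follows from Lemma~\ref{lem:cross} at a \emph{fixed} scale as $\lambda\to\infty$; no exponential decay in the scale, no RSW, and no sharpness-of-transition input is needed (only the supercritical crossing statement implicit in Bollob\'as--Riordan). Replacing your annulus union bound with this path-counting renormalization is the missing idea.
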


The results of Benjamini and Schramm~\cite{benjamini2001percolation} include that $p_c(\lambda)$ tends to
zero as $\lambda\searrow 0$ and that for any $p\in (p_c,1-p_c)$ there are infinitely many infinite black clusters 
almost surely, while for $p\geq 1-p_c$ there is almost surely exactly one infinite black cluster.
These results demonstrate profound differences with Voronoi percolation on the ordinary, Euclidean plane.
In that case, the critical probability was shown 
to equal $1/2$ in the work of Zvavitch~\cite{Zvavitch} and Bollob\'as and Riordan~\cite{bollobas2006critical}, no matter
the intensity of the underlying Poisson process. (That the precise value of the intensity is irrelevant in the Euclidean setting 
is easily seen by considering the effect of dilations $x\mapsto\rho x$ on a homogeneous Poisson process and its Voronoi tessellation.)
What is more, in the Euclidean case there is almost surely at most one infinite cluster for any value of $p$ and $\lambda$. 
%%% Reminder to ourselves, not intended for the paper, just in case someone asks in the future. 
%%% Easy argument to see this is as follows. Zvavitch has show no perc at p=1/2. 
%%% When p>1/2 white has only finite clusters, hence there cannot be >= 2 disctinct infinite black clusters. 
%%% They would have to be separated by an infinite white clusters.

Some intuition as to why one might expect that the large $\lambda$ limit of the critical value in the hyperbolic plane
might coincide with the critical value in the Euclidean plane is given by the following observation.
As we ``zoom in'' around any given point on $\Haa^2$, the geometry starts to look more and more like the geometry of the Euclidean plane; 
and as the intensity $\lambda\to\infty$ the points of the Poisson process get packed closer and closer together.
In other words, as $\lambda\to\infty$, it becomes increasingly difficult to distinguish the ``local picture'' from ordinary, Euclidean 
Voronoi percolation. 
Of course the same is not necessarily true for global characteristics of the model (for instance, the simple random 
walk on the Voronoi tessellation is recurrent in the Euclidean case~\cite{Louigi} and transient in the hyperbolic 
case~\cite{Elliot}, irrespective of the value of the intensity parameter $\lambda$).
So additional ideas are needed for the proof of Theorem~\ref{thm:main}.

As in other two-dimensional percolation models, a central role in the proof by Bollob\'as and Riordan~\cite{bollobas2006critical}
that the Euclidean Voronoi percolation model almost surely has an infinite black cluster for every $p>1/2$
is played by crossings of rectangles. (See Section~\ref{sec:crosseucl} for the formal definition.) 
It follows from their work that for any $p>1/2$ and any fixed rectangle, the crossing probability tends to one as
$\lambda\to\infty$ (Bollob\'as and Riordan did not state precisely this, as they did not need this statement 
for their proof, but it follows from their work as we point out in more detail in Section~\ref{sec:crosseucl}). 
An ingenious and technically involved argument in their proof establishes that at $p=1/2$ the crossing probabilities
do not tend to zero. They then apply tools from discrete Fourier analysis to show the
crossing probabilities are ``boosted'' to close to one for $p>1/2$. 
Tassion~\cite{Tassion16} later improved over the $p=1/2$ part of their proof, giving a shorter argument showing the stronger statement 
that when $p=1/2$ the crossing probabilities are in fact bounded away from zero. 
This opened the way for a more detailed picture of the behaviour of the model at the critical value $p=1/2$
(see e.g.~\cite{AhlbergEtAl16, Vanneuville19}).
We will however only make use of the contributions of Bollob\'as and Riordan~\cite{bollobas2006critical} in 
our proof of Theorem~\ref{thm:main}.

We remark that here in the introduction and in the rest of the paper we focus on mentioning works directly relevant to percolation on 
Voronoi tesselations. 
Percolation theory is of course a broader and older subject, and some of the tools from the literature we'll rely on (and cite) are adaptations 
of earlier results that were geared towards percolation on lattices. 
The monograph~\cite{bollobas2006percolation} provides an introduction to percolation theory.

\vspace{1ex}

{\bf Sketch of the main ideas used in the proof.}
A crucial ingredient in our proof is Lemma~\ref{lem:cross} below, stating that in the hyperbolic Voronoi percolation model
with $p>1/2$, any fixed rectangle $R$ %in the Poincar\'e disk 
has a black crossing with probability tending to one 
as $\lambda\to\infty$. (Here ``rectangle'' refers to how $R$ appears in the Poincar\'e disk model of $\Haa^2$.)
We derive this from the results on crossings in the Euclidean case mentioned above. 
First, we argue that it is sufficient to prove the statement only for small enough rectangles. Then we employ a coupling, 
provided by Lemma~\ref{lem:coupling} below, to Euclidean Voronoi percolation with a slightly smaller value of $p$ and an 
appropriate value of the intensity parameter. 
This coupling has the property that if $B \subseteq R$ is the region inside our sufficiently small rectangle 
coloured black by the Voronoi tessellation of the hyperbolic Poisson point process and $\tilde{B} \subseteq R$ the region
coloured black by the Voronoi tessellation of the Euclidean Poisson point process, then $\tilde{B} \subseteq B$ with 
probability tending to one as $\lambda\to\infty$. So if there is a crossing of $R$ 
in the Euclidean Poisson point process, then there will be one in the hyperbolic Poisson process. 

We now sketch how we use Lemma~\ref{lem:cross} to prove Theorem~\ref{thm:main}.
Since Benjamini and Schramm~\cite{benjamini2001percolation} have already shown that $p_c(\lambda)<1/2$ for all
$\lambda$, it suffices to show that for every $p<1/2$ there exists a $\lambda_0 = \lambda_0(p)$ such that 
$\Pee_{\lambda,p}(\text{$\exists$ infinite black cluster}) = 0$ for all $\lambda > \lambda_0$.
Equivalently, switching the roles of black and white, it suffices to show that for every $p>1/2$ we have
$\Pee_{\lambda,p}( \text{$\exists$ infinite white cluster} ) = 0$ for all sufficiently large $\lambda$.
We will define a dependent site percolation model on a certain, fixed triangulation $\Tcal$ of $\Haa^2$, with the property 
that the existence of an infinite white cluster in the hyperbolic Voronoi tessellation implies the existence
of an infinite open cluster in the dependent percolation model.
Lemma~\ref{lem:nearby} provides, for each bounded set $A \subseteq \Haa^2$ each $\delta>0$, an event $\nearby(A,\delta)$
that holds with probability tending to one as $\lambda\to\infty$, and 
with the property that if it occurs then the colouring of $A$ depends only on the part of the Poisson process
within distance $\delta$ of $A$.
For each triangle of $\Tcal$, we place six thin rectangles around it in such a way that 
if they each have a crossing, then there will be a black, continuous, closed curve separating the triangle from infinity.
We declare a triangle of $\Tcal$ {\em closed} if the six rectangles each have a crossing and in addition 
the event $\nearby(A,\delta)$ holds for a suitably chosen $\delta$ and set $A$.
This will yield a $k$-independent site percolation model (see Section~\ref{sec:kindept} for the definition) on $\Tcal$ 
for some suitable $k$. By Lemma~\ref{lem:cross}, for sufficiently large intensities $\lambda$, the probability of sites 
being open will be so small that all open clusters are finite almost surely in this $k$-independent site percolation model.
To conclude the proof, we then observe that if there were to exist an infinite white cluster in the Voronoi percolation model, then all 
the triangles of $\Tcal$ it intersects would be open and hence would form part of some infinite open cluster.

One elementary fact we rely on in our proofs is that a homogeneous Poisson point process on $\Haa^2$ is described by an 
inhomogeneous Poisson process on the unit disk with an intensity function that corresponds to $\lambda$ times the area functional 
of the Poincar\'e disk model.
In some arguments, we switch back and forth between using the Poincar\'e disk metric and the Euclidean metric to generate 
the Voronoi cells.  The reason for doing this is that it allows for comparatively elementary proofs, that
avoid lengthy and/or technical computations.
(Lemma~\ref{lem:adjacency} below shows this change of metric almost 
surely does not change the combinatorial structure of the tessellation. That is, even though the cells will look different, whether 
or not the cell of $z$ meets the cell of $z'$ is unaltered by the change in metric.)

We would like to mention a closely related work by Benjamini and Schramm~\cite{BSconformal} on  
Voronoi percolation on general, smooth, Riemannian manifolds. 
There, Benjamini and Schramm consider crossings (using a different definition 
for crossings from ours) in the situation where one changes the metric in a conformal way, but 
the intensity measure of the underlying Poisson point process remains unchanged, and is comparable to the 
natural area measure of the Riemannian manifold. 
Amongst other things they show that, for any fixed $p$, the large $\lambda$ limit of the crossing 
probabilities -- if the limit exists -- is unchanged by the change in metric. 
In contrast, our Lemma~\ref{lem:cross} establishes that the large $\lambda$ limit 
for the probability of crossing a rectangle is unchanged if $p>1/2$ and we start from the ordinary, Euclidean Voronoi 
percolation model, we leave the metric unchanged, but change the intensity measure to match the area functional of the 
Poincar\'e disk.

\section{Notation and preliminaries\label{sec:preliminaries}}

Here we collect some facts, definitions, notations and results from previous work that we will need for 
the proof of Theorem~\ref{thm:main}.

\subsection{Ingredients from hyperbolic geometry\label{sec:geom}}

The hyperbolic plane $\Haa^2$ is a two dimensional surface with  constant Gaussian curvature $-1$. 
There are many models, i.e. isometric coordinate charts, for $\Haa^2$ including
the Poincar\'e disk model, the Poincar\'e half-plane model, and the Klein disk model.
A gentle introduction to Gaussian curvature, hyperbolic geometry and these representations 
of the hyperbolic plane can be found in~\cite{stillwell2012geometry}.
In this paper we will exclusively use the Poincar\'{e} disk model.  We briefly recollect its definition 
and some of the main facts we shall be using in our arguments below, and refer the reader to~\cite{stillwell2012geometry}
for the proofs and more information. 
The Poincar\'{e} disk model is constructed by equipping the open unit disk $\Dee \subseteq \eR^2$ with an appropriate metric
and area functional.  For points $u,v \in \Dee$ the hyperbolic distance can be given explicitly by 

$$\dist_{\Haa^2}(u,v):=2\arcsinh\left(\frac{\norm{u-v}}{\sqrt{(1-\norm{u}^2)(1-\norm{v}^2)}}\right)$$ 

\noindent
where $\norm{\cdot}$ denotes the Euclidean norm. For any measurable subset $A \subseteq \Dee$ its {\em hyperbolic area} is given by

$$\text{area}_{\Haa^2}(A)=\int_{A}\frac{4}{(1-x^2-y^2)^2}dydx.$$ 

In many of our arguments, we will be simultaneously considering both the Euclidean metric and the hyperbolic metric.
We use subscripts to distinguish the metric under consideration. For instance

$$ \ballR( u, r ) := \{ v \in \eR^2 : \norm{u-v} < r \}, \quad \ballH(u,r) := \{ v \in \Dee : \distH(u,v) < r\}. $$

We refer to $\ballH(u,r)$ as a {\em hyperbolic disk} and to 
$\partial \ballH(u,r) = \{ v \in \Dee : \distH(u,v) = r\}$ as a {\em hyperbolic circle}.
We will make use of the following standard fact. 
%(a proof in greater generality can be found 
%in~\cite[Fact 1]{cannon1997hyperbolic}):

\begin{lemma}\label{lem:conformal1} 
Every hyperbolic circle is also a Euclidean circle; and every Euclidean circle contained in $\Dee$ is also a hyperbolic circle.
\end{lemma}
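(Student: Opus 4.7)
The plan is to handle the two implications separately, relying on two standard facts about the Poincar\'e disk model~\cite{stillwell2012geometry}: the orientation-preserving isometries of $\Haa^2$ are precisely the M\"obius transformations mapping $\Dee$ onto itself, and any M\"obius transformation of the plane sends each generalized circle (Euclidean circle or line) to a generalized circle.

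For the first direction I would first treat the case of a hyperbolic circle centred at the origin. From the given distance formula, $\distH(0,v) = 2\arcsinh(\norm{v}/\sqrt{1-\norm{v}^2})$ depends only on $\norm{v}$, so $\partial \ballH(0,r) = \{ v \in \Dee : \norm{v} = \rho \}$ for some $\rho \in (0,1)$, which is visibly a Euclidean circle centred at $0$. For a general hyperbolic circle $\partial \ballH(u,r)$ with $u \in \Dee$, I would pick a disk isometry $\phi$ sending $u$ to $0$; then $\phi(\partial \ballH(u,r)) = \partial \ballH(0,r)$ is a Euclidean circle, and $\phi^{-1}$ sends it back to a generalized circle contained in $\Dee$. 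Since this image is bounded, it cannot be a Euclidean line and is therefore a Euclidean circle.

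For the converse I would start with a Euclidean circle $C \subseteq \Dee$, let $w$ denote its Euclidean centre, and consider the Euclidean line $\ell$ through $0$ and $w$ (any line through $0$ if $w = 0$). The set $\ell \cap \Dee$ is a diameter of $\Dee$, hence a hyperbolic geodesic, and $\ell$ meets $C$ in two antipodal points $a,b$. Let $u$ be the unique point on the geodesic segment from $a$ to $b$ with $\distH(u,a) = \distH(u,b) =: r$, and set $C' := \partial \ballH(u,r)$. By the first part $C'$ is a Euclidean circle. The Euclidean reflection through $\ell$ is a hyperbolic isometry fixing $u$, so it maps $C'$ onto itself; in particular the Euclidean centre of $C'$ must lie on $\ell$. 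Since $C'$ passes through both $a$ and $b$, the chord $[a,b]$ is a Euclidean diameter of $C'$. But $[a,b]$ is a Euclidean diameter of $C$ by construction, so $C' = C$.

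The only real obstacle I anticipate is ensuring that the ingredients from hyperbolic geometry — the identification of disk isometries with M\"obius transformations, the invariance of the family of generalized circles under M\"obius maps, and the fact that Euclidean diameters of $\Dee$ are hyperbolic geodesics — are invoked cleanly; once those are in place the argument is elementary and requires no computation beyond reading off the distance from $0$.
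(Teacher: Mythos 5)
The paper states Lemma~\ref{lem:conformal1} as a ``standard fact'' and gives no proof of its own, referring instead to Stillwell's book for background, so there is no in-paper argument to compare against. Your proof is correct and complete. Both directions are sound: the first reduces to the origin-centred case via a disk isometry and uses that M\"obius maps preserve generalized circles while boundedness rules out a line; the second constructs the candidate hyperbolic circle $C'$ and then nails it down by a clean symmetry argument (the reflection in $\ell$ forces the Euclidean centre of $C'$ onto $\ell$, so $[a,b]$ is a common Euclidean diameter of $C$ and $C'$, whence $C'=C$). The one implicit ingredient worth naming explicitly is that Euclidean reflection in a diameter of $\Dee$ is indeed a hyperbolic isometry, which follows at once because the conformal factor $4/(1-\norm{u}^2)^2$ depends only on $\norm{u}$; once you say that, every step is justified and no heavier machinery is needed.
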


\noindent
(But of course, the centre and radius of a circle with respect to the hyperbolic metric do not coincide with the
centre and radius with respect to the Euclidean metric.)

The geodesic (shortest curve) under the hyperbolic metric between $u,v \in \Dee$ is the segment 
of the unique circle through $u,v$ that intersects $\partial\Dee$ at right angles. (This will be a circle of ``infinite radius'', 
i.e.~a line, if $u,v$ lie on a line through the origin $o$.)
A {\em hyperbolic triangle} refers of course to (the region inside) the three geodesics between three distinct points $a,b,c\in\Dee$.
If $\alpha, \beta, \gamma$ denote the angles at which these geodesics meet, then 
it always holds that $\alpha + \beta + \gamma < \pi$. In fact, for every $\alpha,\beta,\gamma$ for which the inequality holds, there exists a 
triangle $T$ with those angles. %%Moreover $\areaH(T) = \pi - (\alpha+\beta+\gamma)$.
In particular, there exists an (equilateral) triangle $T$ with all angles equal to $2\pi/7$.
It is possible to tile $\Haa^2$ with copies of this triangle $T$.  Here copy means the image under a $\Haa^2$-isometry, i.e.~a
map that preserves hyperbolic distance and hyperbolic area.
See Section 7.3 of~\cite{stillwell2012geometry} for details and proofs.
%The $\Haa^2$-isometries, i.e.~maps that preserve hyperbolic distance, are 
%the set of M\"obius transformations satisfying certain additional constraints.)
The tessellation of $\Haa^2$ by isometric copies of $T$ is the $(7,7,7)$-tesselation the 
notation of Stillwell -- meaning that at every corner of every 
triangle, seven triangles meet.  (See Figure~\ref{fig:order7} for a depiction.)

\begin{figure}[!htb]
\center{\includegraphics[width=.4\textwidth ]{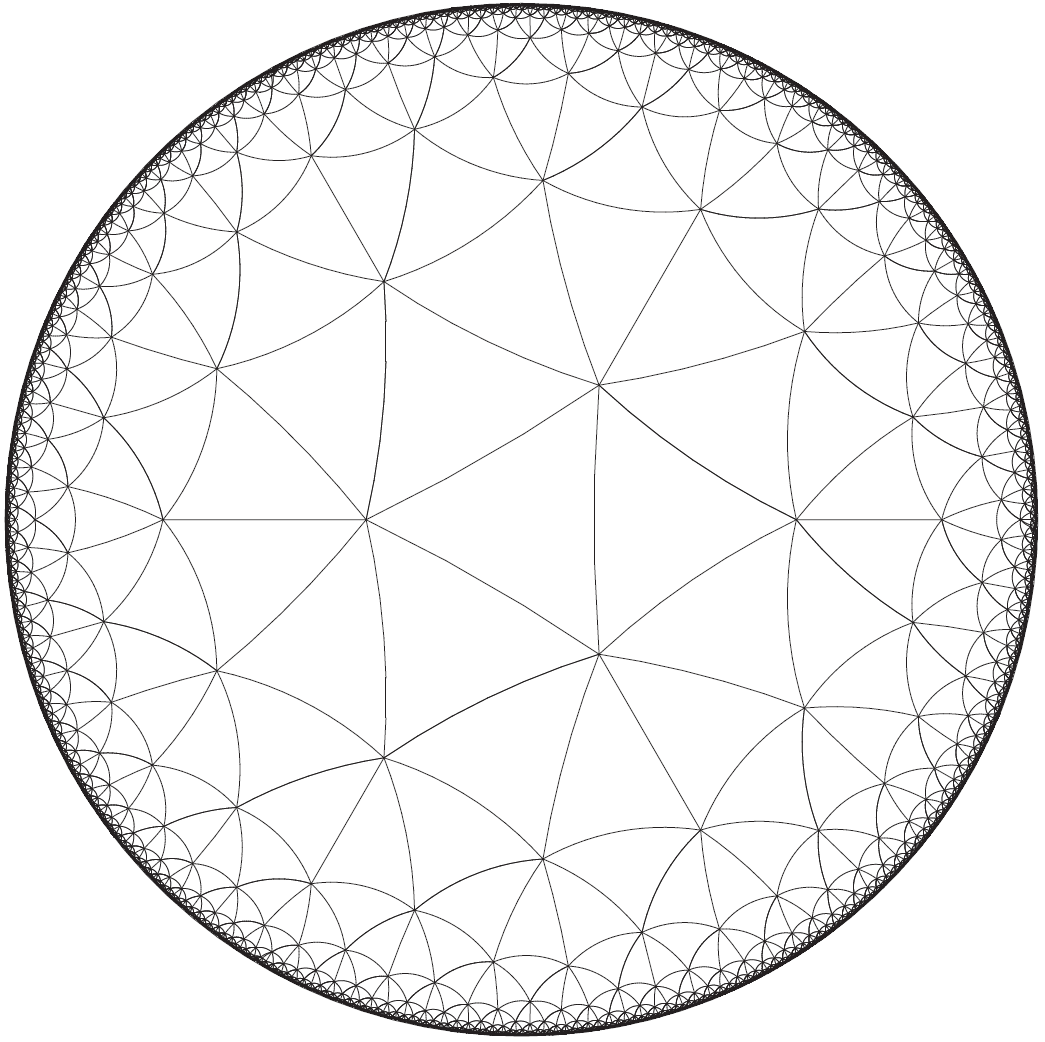} }
\caption{\label{fig:order7} The $(7,7,7)$-triangulation.}
\end{figure}

For $\Zcal$ a countable point set in the hyperbolic, respectively Euclidean, plane we will denote the corresponding 
hyperbolic, respectively Euclidean, Voronoi cells by:

$$ \begin{array}{c} 
\displaystyle \cellH(z;\Zcal) := \{ u \in \Dee : \distH(u,z) = \inf_{z'\in\Zcal} \distH(u,z') \}, \\
\displaystyle \cellR(z;\Zcal) := \{ u \in \eR^2 : \norm{u-z} = \inf_{z'\in\Zcal} \norm{u-z'} \}. 
\end{array} $$

\noindent
Usually the set $\Zcal$ is clear from the context, in which case we will just drop the second argument.

\subsection{Hyperbolic Poisson point processes}

In the rest of this paper $\Zcal$ will usually, but not always, denote a homogeneous Poisson point process (PPP) on $\Haa^2$. 
Analogously to homogeneous Poisson point processes on the ordinary, Euclidean plane, a Poisson process $\Zcal$ of constant intensity 
$\lambda$ on the hyperbolic plane is characterized completely by the properties that
{\bf a)} for each (measurable) set $A\subseteq \Dee$ the random variable $|\Zcal \cap A|$ is Poisson distributed with mean 
$\lambda \cdot \areaH(A)$, and {\bf b)}
if $A_1, \dots, A_m \subseteq \Dee$ are (measurable and) disjoint then the random variables $|\Zcal\cap A_1|, \dots, |\Zcal\cap A_m|$ are 
independent.

In the light of the formula for $\areaH(.)$ above, we can alternatively view $\Zcal$ as an {\em inhomogeneous} Poisson point 
process on the ordinary, Euclidean plane with intensity

$$ u \mapsto \lambda \cdot 1_{\Dee}(u) \cdot \frac{4}{(1-\norm{u}^2)^2}. $$

%Included 
Throughout the remainder,  we attach to each point of $\Zcal$ a randomly and independently chosen
colour. (Black with probability $p$ and white with probability $1-p$.)
We let $\Zcalb$ denote the black points and $\Zcalw$ the white points of $\Zcal$.
In the language of for instance~\cite{last2017lectures}, we can view $\Zcal$ as a {\em marked} Poisson point process,
the marks corresponding to the colours.
By standard properties of Poisson processes we have that $\Zcalb, \Zcalw$ are {\em independent} Poisson point processes on $\Haa^2$ 
with constant intensities $\lambda \cdot p$, respectively $\lambda \cdot (1-p)$.
We will interchangeably use the point of view of marked point processes and that of a pair of independent Poisson points processes
$(\Zcalb, \Zcalw)$ throughout the paper. 
We use the notation $\Pee_{\lambda,p}(.)$ for the probability measure associated with $\Zcal$ together with its marks, or equivalently
the pair $(\Zcalb,\Zcalw)$. 
In some of our arguments, we are going to want to simultaneously consider a homogeneous Poisson process, with 
marks, on the ordinary, Euclidean plane. In order to keep the two apart, we use $\tilde{\Pee}_{\lambda,p}(.)$ to denote 
the associated probability measure.

\subsection{$k$-independent percolation\label{sec:kindept}}

For a fixed graph $G$, a site percolation probability measure assigns to each vertex the state ``open'' or ``closed''.
Often the situation is considered where the states of the vertices are independent, but for us it will be useful to 
consider a more general situation where there is some dependence.
We say that a measure $\Pee$ on the possible assignments of a state to each vertex
is {\em $k$-independent} if for any set of vertices $S$ such that each pair of distinct vertices in $S$ has graph 
distance $\geq k$, the states of the vertices in $S$ are independent. 
We will make use of the following observation. It is not new.  
A much stronger result is for instance provided by Ligett, Schonmann and Stacey~\cite{liggett1997domination}. 
But, since the proof of the lemma is very short, we choose to give it anyway for the benefit of readers that may not be familiar.

\begin{lemma}\label{lem:kindept}
For every $k, d\in \eN$ there exists a $p_1 = p_1(k, d)>0$ such that the following holds.
For every countable graph $G$ with maximum degree at most $d$ and 
any $k$-independent site percolation measure on $G$ in which each site is open with probability at most $p_1$, we have 

$$\Pee(\text{$\exists$ an infinite open cluster})=0.$$ 

\end{lemma}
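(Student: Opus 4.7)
The plan is to run a standard Peierls-type counting argument, applied to self-avoiding paths emanating from a fixed vertex, where the independence needed to bound path probabilities is extracted from the $k$-independence hypothesis by thinning.

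First I would fix an arbitrary vertex $v \in G$ and consider the collection $\Pscr_n(v)$ of self-avoiding paths of length $n$ starting at $v$. Since $G$ has maximum degree at most $d$, we have $|\Pscr_n(v)| \leq d(d-1)^{n-1} \leq d^n$. For any such path $P = (v_0, v_1, \dots, v_n)$, the vertices $v_0, v_k, v_{2k}, \dots$ form a subset $S \subseteq P$ of size at least $\lceil (n+1)/k \rceil \geq n/k$ in which every pair of distinct vertices is at graph-distance exactly a multiple of $k$, hence at least $k$. By the $k$-independence hypothesis, the states of the vertices in $S$ are independent, so the probability that all of $S$ is open is at most $p_1^{|S|} \leq p_1^{n/k}$. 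Since the event that all of $P$ is open is contained in the event that all of $S$ is open, the same bound applies to each path.

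Next I would take a union bound over $\Pscr_n(v)$: the expected number of fully-open self-avoiding paths of length $n$ starting at $v$ is at most $d^n \cdot p_1^{n/k} = \bigl(d \cdot p_1^{1/k}\bigr)^n$. Choosing any $p_1 < d^{-k}$ makes the base strictly less than $1$, so this expectation (and hence, by Markov's inequality, the probability that at least one such open path exists) tends to $0$ as $n \to \infty$. But if the open cluster of $v$ were infinite, then by local finiteness it would contain self-avoiding open paths from $v$ of every length. Therefore $v$'s open cluster is almost surely finite. Finally, since $G$ is countable, a further union bound over all vertices yields that almost surely no vertex lies in an infinite open cluster, which is the desired conclusion.

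There is no real obstacle here; the only point requiring minor care is the thinning step that converts a long path into an independent subset. The parameter $k$ enters only as a constant-factor loss in the exponent, and the maximum-degree assumption is precisely what controls the combinatorial growth of path counts, so setting $p_1 := \tfrac{1}{2} d^{-k}$ (for instance) suffices.
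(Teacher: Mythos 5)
Your overall strategy is the same Peierls-type counting argument the paper uses, but the thinning step contains a genuine error. You claim that sampling every $k$-th vertex $v_0, v_k, v_{2k}, \dots$ along a self-avoiding path $P$ yields a set $S$ in which ``every pair of distinct vertices is at graph-distance exactly a multiple of $k$, hence at least $k$.'' This conflates distance \emph{along the path} with graph distance in $G$. A self-avoiding path is merely injective; it can fold back on itself, so $v_{ik}$ and $v_{jk}$ may well be adjacent in $G$ even when $|i-j|$ is large. (For instance, if $v_0 v_1 \cdots v_k v_0$ is a cycle in $G$, then $\dist_G(v_0,v_k)=1$.) Consequently the $k$-independence hypothesis, which is stated in terms of graph distance, does not apply to $S$, and the bound $p_1^{|S|}$ is unjustified.

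The fix is the one the paper actually uses: extract from the $\ell+1$ vertices of the path a subset with pairwise \emph{graph} distance $\geq k$ by a greedy argument. Pick a vertex, discard all path vertices within graph distance $<k$ of it (at most $1+d+\cdots+d^{k-1}\leq d^k$ of them, by the degree bound), and repeat. This yields a $k$-separated set of size at least $\ell/(1+d^k)$, which is still linear in $\ell$, so the union bound $d^\ell\cdot p_1^{\ell/(1+d^k)}\to 0$ goes through for $p_1 < d^{-(1+d^k)}$. The threshold you get is worse than your claimed $d^{-k}$, but the lemma only asks for \emph{some} positive $p_1(k,d)$, so this is immaterial. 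Everything else in your write-up --- the path count $d^n$, the Markov/union-bound step, and the final union over the countably many vertices --- is correct and matches the paper.
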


\begin{proof}
It is enough to show that $\Pee(\text{$\exists$ an infinite open path starting at $v$}) = 0$ for all vertices $v$.
Since the number of paths of length $\ell$ starting at $v$ is at most $d^\ell$ and each 
path of length $\ell$ contains a set $S$ of size $\geq \ell / (1+d^k)$ with all pairwise distances $\geq k$, we have

$$ \Pee(\text{$\exists$ an open path starting at $v$ of length $\geq \ell$})
\leq d^{\ell} \cdot p_1^{\ell/(1+d^k)} = 
 \left( d \cdot p_1^{1/(1+d^k)} \right)^\ell \xrightarrow[\ell\to\infty]{} 0, $$

\noindent
provided $p_1 < d^{-(1+d^k)}$.
\end{proof}

\subsection{Crossing rectangles in Euclidean Voronoi percolation\label{sec:crosseucl}}

Let  $R \subseteq \eR^2$ be a rectangle that is not a square. 
We say $R$ has a \emph{(long) black crossing} if there is a 
continuous curve $\gamma\subseteq R$ from one short side of $R$ to the other such that all points of $\gamma$ are black
in the colouring of the plane induced by the Voronoi tessellation under the Euclidean metric. We denote this event as $\cross(R)$.
See Figure~\ref{fig:crossing} for a depiction.%
\begin{figure}[!htb]
\center{\includegraphics[width=.7\textwidth ]{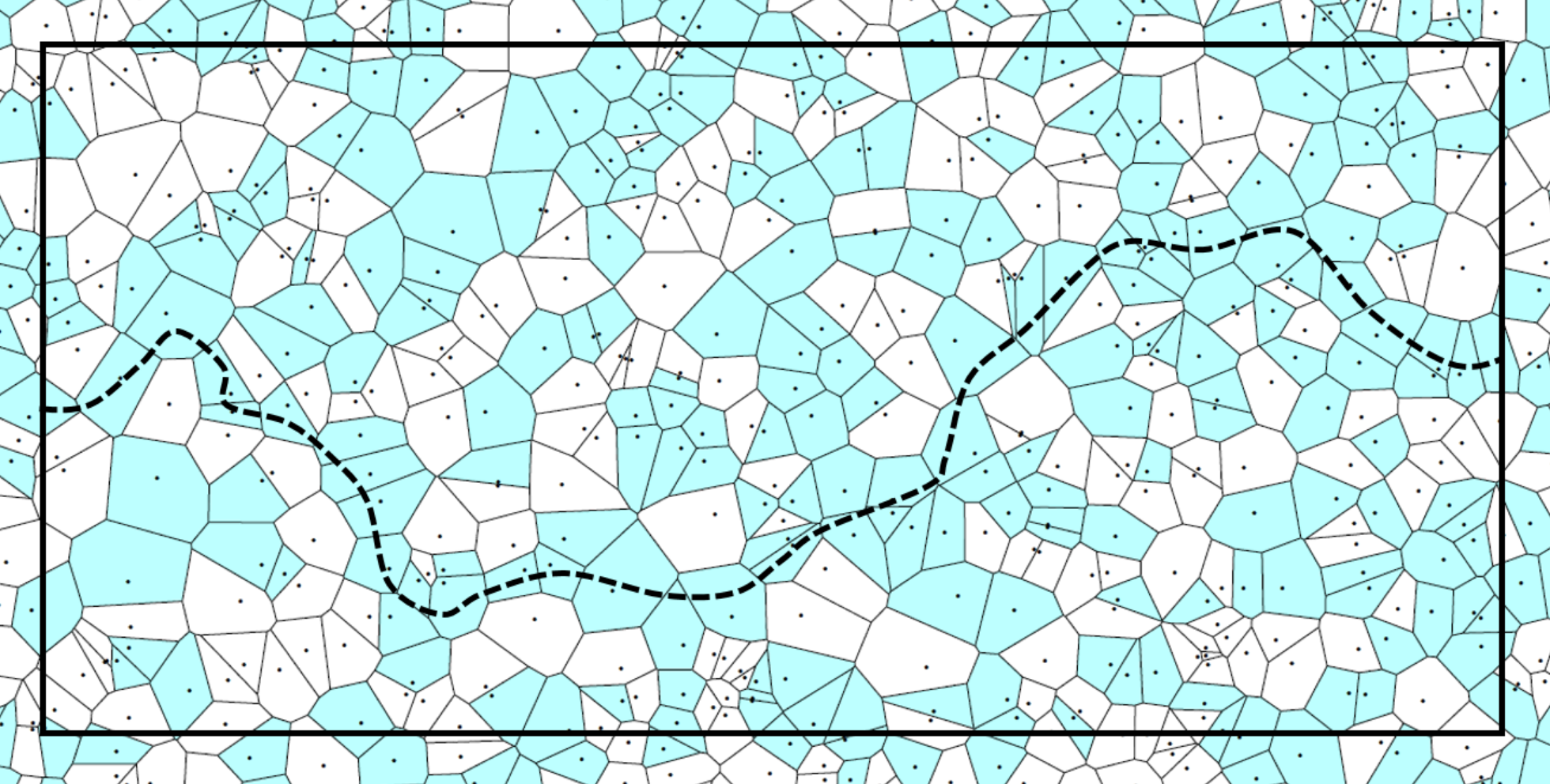}  }
\caption{\label{fig:crossing} A crossing of a rectangle.%
}
\end{figure}
The following result is implicit in the work of Bollob{\'a}s and Riordan~\cite{bollobas2006critical} 
on Voronoi percolation for 
homogeneous Poisson point processes on the ordinary, Euclidean plane $\eR^2$ (and using the Euclidean metric).
Recall that we use $\tilde{\Pee}_{\lambda,p}(.)$ to denote the associated probability measure.

\begin{proposition}\label{prop:bollrior}
For any fixed $p>1/2$ and rectangle $R\subseteq\eR^2$ we have

$$\lim_{\lambda\to\infty}\tilde{\Pee}_{\lambda,p}(\cross(R))=1.$$

\end{proposition}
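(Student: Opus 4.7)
The plan is to derive this from the results of Bollob\'as and Riordan~\cite{bollobas2006critical} via the standard scaling invariance of Euclidean Poisson Voronoi percolation. The key observation is that the model is scale-invariant in the following precise sense: if $\Zcaltil$ is a homogeneous PPP on $\eR^2$ of intensity $\lambda$ and $s>0$, then $s\Zcaltil := \{ s z : z \in \Zcaltil\}$ is a homogeneous PPP of intensity $\lambda/s^2$; moreover, because the Euclidean distance scales uniformly, the Voronoi cell $\cellR(sz; s\Zcaltil)$ equals $s \cdot \cellR(z;\Zcaltil)$. Colouring corresponding points with the same colours, we obtain the identity
\[
\tilde{\Pee}_{\lambda,p}(\cross(R)) \;=\; \tilde{\Pee}_{1,p}(\cross(\sqrt{\lambda}\,R))
\]
for every rectangle $R$ and every $\lambda>0$.

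Thus it suffices to show that for any fixed $p>1/2$ and any rectangle $R_0\subseteq\eR^2$, the probability $\tilde{\Pee}_{1,p}(\cross(sR_0))$ tends to one as $s\to\infty$. First I would fix the aspect ratio $\rho>1$ of $R_0$ (so $sR_0$ is a rectangle of the same aspect ratio, just scaled up); by monotonicity in $R$ it is enough to treat a single aspect ratio, and in fact any aspect ratio $\rho>1$ that one likes. This is now precisely the form in which sharp threshold results for Voronoi crossings are stated in~\cite{bollobas2006critical}.

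The main result of Bollob\'as and Riordan is that $p_c=1/2$ for intensity-$1$ Euclidean Voronoi percolation, and the heart of their proof consists of two ingredients: (i) at $p=1/2$ the crossing probabilities of rectangles of a fixed aspect ratio do not tend to $0$ as the rectangle is scaled up, and (ii) a sharp-threshold argument based on discrete Fourier analysis which shows that if the crossing probability at $p=1/2$ is bounded below by a positive constant as the rectangle is scaled, then for every $p'>1/2$ the crossing probability tends to one as the rectangle is scaled. Combining (i) and (ii) gives $\tilde{\Pee}_{1,p}(\cross(sR_0)) \to 1$ as $s\to\infty$ for every $p>1/2$ and every rectangle $R_0$ with $\rho>1$. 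Feeding this back through the scaling identity yields the proposition.

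The only real work is to pin down precisely which statement in~\cite{bollobas2006critical} we are invoking: the authors there phrase things in terms of $p_c=1/2$ rather than explicitly stating that crossing probabilities tend to one for $p>1/2$, so the main obstacle is bookkeeping — tracing through their sharp-threshold argument to confirm that it indeed yields the stronger quantitative conclusion that $\cross(sR_0)$ occurs with probability tending to one. Once this is done, the scaling identity above finishes the proof.
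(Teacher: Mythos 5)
Your scaling identity $\tilde{\Pee}_{\lambda,p}(\cross(R)) = \tilde{\Pee}_{1,p}(\cross(\sqrt{\lambda}\,R))$ is exactly the reduction the paper uses. The gap lies in what Bollob\'as and Riordan actually provide. Their Theorem~7.1 gives, for $p>1/2$ and aspect ratio $\rho>1$, only $\limsup_{s\to\infty}\tilde{\Pee}_{1,p}(\cross([0,\rho s]\times[0,s]))=1$, not the full limit. This is not careless phrasing on their part: the RSW-type input they establish at $p=1/2$ is that crossing probabilities stay bounded away from $0$ along \emph{some} sequence of scales --- a $\limsup$ bound, not the $\liminf$ bound you implicitly assume in your ingredient (i) --- and their sharp-threshold argument therefore only delivers a $\limsup$ conclusion for $p>1/2$. (The $\liminf$ improvement at $p=1/2$ is due to Tassion, which the paper explicitly chooses not to rely on.) So after your scaling step you are left with $\limsup_{\lambda\to\infty}\tilde{\Pee}_{\lambda,p}(\cross(R))=1$, which is strictly weaker than the proposition.

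The missing step is a renormalisation argument upgrading $\limsup$ to $\lim$. The paper does this by comparison with $1$-independent bond percolation on $\Zed^2$, following the proof of Theorem~1.1 of Bollob\'as--Riordan: at a single well-chosen large scale (available by the $\limsup$ statement) one couples the Voronoi crossings to a $1$-independent bond model with edge-open probability close to $1$, observes that large rectangles are crossed with probability tending to one in that $1$-independent model, and transfers this back to conclude that $\tilde{\Pee}_{1,p}(\cross(sR_0))\to1$ along \emph{all} large $s$, not merely a subsequence. You would need to add this step to make the argument complete.
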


Since this result does not appear in~\cite{bollobas2006critical} in this precise form, 
we briefly explain how it follows from the results in that paper.
Theorem 7.1 in~\cite{bollobas2006critical} states that for any $p>1/2$ and $\rho>1$  we 
have $\limsup_{s\to\infty} \tilde{\Pee}_{1,p}( \cross( [0,\rho s]\times[0,s]) ) = 1$. That is,
we keep the intensity fixed at $\lambda=1$ and consider larger and larger rectangles with fixed 
aspect ratio $\rho$. By considering the effect of a dilation on a homogeneous, Euclidean Poisson point process
and its Voronoi diagram, this is easily seen to be equivalent to 
$\limsup_{\lambda\to\infty} \tilde{\Pee}_{\lambda,p}(\cross(R) ) = 1$ for any fixed rectangle $R\subseteq \eR^2$. 
That we can replace $\limsup$ by $\lim$ can be seen via a comparison to $1$-independent edge percolation on $\Zed^2$ as
in the proof of Theorem 1.1 in~\cite{bollobas2006critical}. That is, we set up the $1$-independent edge percolation on $\Zed^2$ 
as in that proof, note that the probability that a large rectangle $[0,c \cdot n] \times [0,n]$ is crossed in the $1$-independent
model tends to one, and finally note that this also implies that crossing a rectangle of increasing size and 
fixed aspect ratio tends to one also in the original Voronoi percolation model.

\section{Proofs}

\subsection{The combinatorial structure does not depend on the metric}

We start with an observation that may be of independent interest.
Let $\Zcal$ be a Poisson process on $\Dee$ that has 
constant intensity $\lambda$ with respect to the hyperbolic area measure (or, alternatively, 
intensity $\lambda \cdot 1_{\{u\in\Dee\}} \cdot 4 \cdot (1-\norm{u}^2)^{-2}$ with respect to the ordinary
Lebesgue measure.).
When generating the Voronoi tessellation of $\Zcal$ we could use the hyperbolic metric $\distH$ or the ordinary, Euclidean 
metric.
Of course the tessellations will look different visually (for instance, the sides of the Euclidean Voronoi cells are straight 
line segments, while the sides of the hyperbolic Voronoi cells are circle segments), but if we view the tessellations as 
planar graphs on the vertex set $\Zcal$ with vertices adjacent in the graph if and only if the corresponding Voronoi cells touch 
then in fact the two graphs are (almost surely) identical as the following lemma demonstrates.

\begin{lemma}\label{lem:adjacency}
Almost surely, it holds that $\cellR(z_1)$ and $\cellR(z_2)$ touch if and only if
$\cellH(z_1)$ and $\cellH(z_2)$ touch (for all $z_1, z_2 \in \Zcal$).
\end{lemma}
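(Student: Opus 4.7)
My plan is to use the Delaunay characterisation of Voronoi adjacency in both metrics and then exploit the blow-up of the hyperbolic area density near $\partial\Dee$ to show the two Delaunay structures coincide almost surely. First I would verify that almost surely every Voronoi cell, in either metric, is bounded: for any $z\in\Zcal$ and any line or geodesic through $z$, the two ``sides'' meet $\Dee$ in regions of infinite hyperbolic area and so each side contains infinitely many points of $\Zcal$. This yields the standard characterisation: $\cellR(z_1)$ and $\cellR(z_2)$ touch iff there is $z_3\in\Zcal$ such that the Euclidean circumscribed circle of $\{z_1,z_2,z_3\}$ has no further point of $\Zcal$ in its open interior, and analogously for hyperbolic touching with the hyperbolic circumscribed circle.

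By Lemma~\ref{lem:conformal1}, a Euclidean circle contained in $\Dee$ is the same object as a hyperbolic circle, and the open disks that the two interpretations bound agree as subsets of $\Dee$. So whenever the Euclidean circumscribed circle of a triple $\{z_1,z_2,z_3\}\subseteq\Zcal$ lies in $\Dee$, the triple is simultaneously a Euclidean and a hyperbolic Delaunay triple, and the notions of emptiness coincide. It therefore suffices to show that, almost surely, every Euclidean circumscribed circle of a triple from $\Zcal$ with empty interior is contained in $\Dee$.

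The key geometric observation is that the hyperbolic area density $4/(1-x^2-y^2)^2$ diverges at $\partial\Dee$, so any open subset of $\Dee$ whose closure meets $\partial\Dee$ has infinite hyperbolic area. Let $D$ denote the open Euclidean disk bounded by the circumscribed circle. If $D$ is not contained in $\Dee$, then either $D\setminus\bar\Dee\neq\emptyset$, in which case $\partial D$ crosses $\partial\Dee$ and $D\cap\Dee$ contains a full one-sided neighbourhood of a non-trivial arc of $\partial\Dee$, or $D\subseteq\bar\Dee$ with $\bar D$ tangent to $\partial\Dee$, in which case $D$ is a horoball. In both cases $D\cap\Dee$ has infinite hyperbolic area, so $|\Zcal\cap D|$ is Poisson distributed with infinite mean and thus almost surely infinite, contradicting emptiness.

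Finally, to upgrade this pointwise vanishing to an almost-sure statement simultaneously for \emph{all} triples in $\Zcal$, I would apply the Mecke/Slivnyak formula: the expected number of triples $(z_1,z_2,z_3)\in\Zcal^3$ whose Euclidean circumscribed circle has empty interior but is not contained in $\Dee$ equals an integral over $\Dee^3$ of $\lambda^3$ times an emptiness probability that, by the previous paragraph, vanishes whenever the circle is not in $\Dee$. Hence this expectation is zero and almost surely no such triple exists. The main obstacle is precisely this final step; the Mecke formula reduces the uncountable family of potentially bad circles to the countable collection of Delaunay triples from $\Zcal$, at which point the geometric observation of the third paragraph closes the argument.
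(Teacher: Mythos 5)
Your proposal is correct and follows essentially the same route as the paper: reduce adjacency to empty circumscribed disks of triples, invoke the circle-equals-circle lemma, observe that any such disk whose boundary circle leaves $\Dee$ (either crossing or tangent to $\partial\Dee$) has infinite hyperbolic area, and then use the Mecke formula to rule out bad triples. One small wording slip: you write ``if $D$ is not contained in $\Dee$'' but then correctly treat the case of an open horoball, which \emph{is} contained in $\Dee$; what you mean, and what your case split actually covers, is that $\overline{D}$ (equivalently the boundary circle) is not contained in $\Dee$, so that Lemma~\ref{lem:conformal1} fails to apply.
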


\begin{proof}
A key elementary observation is that $\cellR(z_1), \cellR(z_2)$ touch if and only if
there exists a disk with $z_1, z_2$ on its boundary and no points of $\Zcal$ in its interior.
(The centres of such disks are precisely the points where the Voronoi cells meet.)
The same is true for $\cellH(z_1), \cellH(z_2)$ except that we need a {\em hyperbolic} disk
with $z_1, z_2$ on its boundary and no points of $\Zcal$ in its interior.
% By Lemma~\ref{lem:conformal1} a hyperbolic disk is just a Euclidean disk whose bounding circle is 
% completely contained in the interior of the unit disk $\Dee$. (Of course, the center of this 
% disk with respect to the hyperbolic metric usually does not coincide with the center with respect to 
% the Euclidean metric, but that will not concern us here.)
Applying Lemma~\ref{lem:conformal1}, it immediately follows that if $\cellH(z_1)$ and $\cellH(z_2)$ touch, then 
$\cellR(z_1)$ and $\cellR(z_2)$ also touch.
And if $\cellR(z_1), \cellR(z_2)$ touch but $\cellH(z_1), \cellH(z_2)$ do not then 
there exists a (Euclidean) disk with $z_1, z_2$ on its boundary and no other point of $\Zcal$ in its interior, but
{\em every} such disk either ``sticks out'' of the unit disk $\Dee$ or is tangent to its boundary $\partial\Dee$.

Further to the elementary observation above we remark that, almost surely, whenever 
$\cellR(z_1), \cellR(z_2)$ touch there is in fact a third point $z_3 \in \Zcal$ and a disk with 
$z_1, z_2, z_3$ on its boundary and no points of $\Zcal$ in its interior.
(The centre of this disk corresponds to a common corner of $\cellR(z_1), \cellR(z_2), \cellR(z_3)$.)
It is possible that no such $z_3$ exists but in that case all points of $\Zcal$ must be collinear -- which 
almost surely does not happen. 
% In slightly more detail : every disk with $z_1, z_2$ on its boundary has a (Euclidean) centre 
% on the (Euclidean) line that bisects the line segment between $z_1$ and $z_2$. 
% Now we can imagine taking a disk with $z_1, z_2$ on its boundary
% and no points in its interior, and then ``slide'' its centre along this bisector line until the disk first meets an additional point
% of $\Zcal$. This is possible unless all points of $\Zcal$ are collinear.

It thus suffices to show that almost surely there are no triples $z_1, z_2, z_3 \in\Zcal$ such that the unique disk 
$D$ 
with all three points on its boundary {\bf a)} is
either tangent to $\partial\Dee$ or ``sticks out'', yet {\bf b)} has no points of $\Zcal$ in its interior.
We next point out that if {\bf a)} holds then $\areaH( D \cap \Dee ) = \infty$.
(A convenient way to see this without having to integrate is to note that $D$, while not
being a hyperbolic disk itself, contains hyperbolic disks of all radii.)

That there are indeed no triples satisfying {\bf a)} and {\bf b)} almost surely now follows, for instance, using 
the Campbell-Mecke formula (see for instance~\cite[p.30]{last2017lectures}) to show the expected number of 
such triples equals zero.
\end{proof}

\subsection{Locally controlling the Voronoi cells}

For $A \subseteq \eR^2$ we write $A_\delta:= \bigcup_{a\in A} \ballR(a,\delta)$. 
Recall that we use $\Pee_{\lambda,p}(.)$ for the probability measure associated with a homogeneous {\em hyperbolic} Poisson process and
$\tilde{\Pee}_{\lambda,p}(.)$ for a homogeneous {\em Euclidean} Poisson process.

\begin{lemma}\label{lem:nearby}
Let $A\subseteq \Dee$ and $\delta>0$ be such that $A_\delta \subseteq \Dee$.
There exists an event $\nearby(A,\delta)$ with the properties that 
\begin{itemize}
 \item[{\bf (i)}] $\nearby(A,\delta)$ depends only on $\Zcal \cap A_\delta$.
 \item[{\bf (ii)}] We have $\displaystyle \lim_{\lambda\to\infty} \Pee_{\lambda,p}( \nearby(A,\delta) ) 
 = \lim_{\lambda\to\infty} \tilde{\Pee}_{\lambda,p}( \nearby(A,\delta) ) = 1$.
 \item[{\bf (iii)}] If $\nearby(A,\delta)$ holds then, for every $u \in A$,  we have $\displaystyle \inf_{z\in\Zcal}\norm{u-z}<\delta$.
 \item[{\bf (iv)}] If $\nearby(A,\delta)$ holds then, for every $u \in A$, we have 
 $\displaystyle \inf_{z\in\Zcal}\distH(u,z) =\inf_{z\in\Zcal, \atop \norm{u-z}<\delta} \distH(u,z)$.
\end{itemize}
\end{lemma}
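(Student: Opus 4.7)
The plan is to define $\nearby(A,\delta)$ as the intersection of finitely many events, each asserting that a small hyperbolic ball contains at least one Poisson point. With the right choice of radius, all four properties will drop out essentially from Poisson void probabilities together with a single universal comparison between the metrics. Specifically, the Poincar\'e formula for $\distH$ together with $(1-\norm{u}^2)(1-\norm{v}^2) \leq 1$ yields the position-independent inequality
$$\norm{u-v} \leq \sinh\bigl(\tfrac12 \distH(u,v)\bigr) \qquad \text{for all } u,v\in\Dee,$$
so setting $\rho := 2\arcsinh(\delta/2)$ gives $\ballH(u,\rho)\subseteq \ballR(u,\delta/2)$ uniformly in $u\in\Dee$. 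The hypothesis $A_\delta\subseteq\Dee$ forces $\norm{u}\leq 1-\delta$ on $A$, so $\cl A$ is a compact subset of $\Dee$; since $A$ is dense in $\cl A$, a standard subcover argument produces $w_1,\dots,w_N\in A$ with $A\subseteq\bigcup_i \ballH(w_i,\rho/2)$. I would take
$$\nearby(A,\delta) := \bigcap_{i=1}^N \{\Zcal\cap\ballH(w_i,\rho/2)\neq\emptyset\}.$$

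With this definition (i) is immediate: since $w_i\in A$, we have $\ballH(w_i,\rho/2)\subseteq\ballR(w_i,\delta/2)\subseteq A_\delta$. For (iii) and (iv) I would fix $u\in A$ on the event $\nearby(A,\delta)$, pick $w_i$ with $u\in\ballH(w_i,\rho/2)$ and a $z\in\Zcal\cap\ballH(w_i,\rho/2)$; the triangle inequality gives $\distH(u,z)<\rho$, hence $\norm{u-z}<\delta/2$, which yields (iii). For (iv) the same comparison, reversed, shows that any $z'\in\Zcal$ with $\norm{u-z'}\geq\delta$ satisfies $\distH(u,z')\geq 2\arcsinh(\delta)>\rho>\distH(u,z)$, so $z'$ cannot attain the hyperbolic nearest-neighbour infimum; consequently the infimum over $\Zcal$ coincides with the infimum restricted to $\{z'\in\Zcal:\norm{u-z'}<\delta\}$.

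Property (ii) reduces to the standard Poisson void-probability formula together with a union bound:
$$\Pee_{\lambda,p}\bigl(\nearby(A,\delta)^c\bigr) \leq \sum_{i=1}^N \exp\bigl(-\lambda\cdot\areaH(\ballH(w_i,\rho/2))\bigr),$$
and likewise under $\tilde\Pee_{\lambda,p}$ with hyperbolic area replaced by Euclidean area; since each ball is a nonempty open set both areas are strictly positive, so every summand tends to $0$ as $\lambda\to\infty$. I do not anticipate any real obstacle in this argument --- the only care needed is to make the metric comparison uniform over $A$, which the inequality $\norm{u-v}\leq\sinh(\tfrac12\distH(u,v))$ handles at one stroke; the rest is bookkeeping with the triangle inequality and Poisson void probabilities.
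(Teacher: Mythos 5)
Your proof is correct, and it takes a genuinely different route for the delicate part of the lemma. Both proofs define $\nearby(A,\delta)$ as the intersection of finitely many ``nonempty'' events for small regions, but the choice of regions and, more importantly, the argument for {\bf(iv)} differ. The paper dissects the plane into a grid of tiny Euclidean squares of side $\delta/1000$ and takes $\Scal$ to be those contained in $A_\delta$; you instead extract a finite cover of the compact set $\cl A$ by hyperbolic balls $\ballH(w_i,\rho/2)$ with centres $w_i \in A$. For {\bf(iv)}, the paper argues geometrically: the empty hyperbolic disk around $u$ of hyperbolic radius $\distH(u,z)$ is, by Lemma~\ref{lem:conformal1}, also a Euclidean disk containing the segment $[u,z]$, so if $\norm{u-z}\geq\delta$ it would swallow a sub-disk of diameter $\delta/2$ touching $u$, which must contain grid squares of $\Scal$. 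Your argument bypasses the conformal picture entirely and instead reads off the uniform comparison $\norm{u-v}\leq\sinh\bigl(\tfrac12\distH(u,v)\bigr)$ directly from the Poincar\'e distance formula; with $\rho=2\arcsinh(\delta/2)$ this simultaneously gives $\ballH(u,\rho)\subseteq\ballR(u,\delta/2)$ (so the chosen nearby $z$ has $\distH(u,z)<\rho$) and the reverse estimate $\distH(u,z')\geq 2\arcsinh(\delta)>\rho$ for any $z'$ with $\norm{u-z'}\geq\delta$, from which {\bf(iv)} falls out immediately. Your version is thus more elementary and self-contained --- it needs only the explicit distance formula and a compactness-plus-density covering argument --- while the paper's version slots naturally into the machinery (hyperbolic circles $=$ Euclidean circles) already set up in Section~\ref{sec:geom} and reused in Lemma~\ref{lem:adjacency}. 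Properties {\bf(i)}, {\bf(ii)} and {\bf(iii)} go through essentially identically in both: the balls $\ballH(w_i,\rho/2)\subseteq\ballR(w_i,\delta/2)\subseteq A_\delta$ give {\bf(i)}, finitely many Poisson void probabilities give {\bf(ii)} (noting that each ball has positive area under both measures), and the triangle inequality gives {\bf(iii)}.
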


Before embarking on the proof of this lemma, let us point out that parts~{\bf(iii)} and {\bf(iv)} tell us that 
if $\nearby(A,\delta)$ holds then the black and white colouring of $A$ produced by the (Euclidean/hyperbolic) Voronoi 
tessellation for $\Zcal$ is completely determined by $\Zcal \cap A_\delta$.

\begin{proof}
We dissect $\eR^2$ into axis-parallel squares of (Euclidean) side length $\delta/1000$ in the obvious way, and
we let $\Scal$ denote the set of those squares in the dissection that are contained in $A_\delta$. 
We define $\nearby(A,\delta)$ as the event that each square in $\Scal$ contains at least one point of $\Zcal$.
Obviously {\bf(i)} holds by construction.

For each square $s\in\Scal$, the number of points that fall in it under the 
Euclidean intensity measure is Poisson distributed with mean 
$\lambda \cdot \areaR(s) = \lambda \cdot \delta^2/10^6 = \Omega(\lambda)$
and under the hyperbolic intensity measure it is Poisson with mean $\lambda \cdot \areaH( s ) = \Omega(\lambda )$.
(Here $\Omega(\lambda)$ denotes a quantity lower bounded by a positive constant times $\lambda$.)
For any fixed square $s\in\Scal$ the probability (under either probability measure) that it contains no point of $\Zcal$ is
thus $\exp[ -\Omega(\lambda) ] \xrightarrow[\lambda\to\infty]{} 0$. 
There are only finitely many squares in $\Scal$, so that part {\bf(ii)} follows by the union bound.

Now pick an arbitrary $u \in A$. 
Since there are (lots of) squares of $\Scal$ contained in $\ballR(u,\delta)$, if $\nearby(A,\delta)$ holds, 
then $u$ is within Euclidean distance $<\delta$ of some $z\in\Zcal$. This takes care of {\bf(iii)}.

To see {\bf(iv)}, let $u \in A$ again be arbitrary and let $z\in \Zcal$ be such that $u \in \cellH(z)$.
This means the hyperbolic disk $D$ with centre $u$ and radius $\distH(u,z)$ has no points of $\Zcal$ in its interior.
The disk $D$ is also a Euclidean disk, but working out its Euclidean centre and radius in terms of $u$ and $z$ 
is a little bit tedious.
What is, however, clear is that $D$ contains the line segment $[u,z]$ with endpoints $u,z$.
So in particular, if $\norm{u-z}\geq \delta$ then $D$ contains a disk $D'$ of Euclidean diameter $\delta/2$ such that $u \in \partial D'$.
Clearly any such disk $D'$ is completely contained in $\ballR(u,\delta) \subseteq A_\delta$ and it contains lots of squares of 
$\Scal$ in its interior. So, if $\nearby(A,\delta)$ holds, then we must have $\norm{u-z} < \delta$.
\end{proof}

\subsection{Coupling the hyperbolic PPP with a Euclidean PPP\label{sec:couple}}

Recall that a {\em coupling} of two random objects $\Xcal, \Ycal$ is a joint probability space for $(\Xcal,\Ycal)$ with the correct 
marginals.
The next lemma allows us to (locally) relate a homogeneous, hyperbolic Poisson process
to a homogeneous, Euclidean Poisson process with different parameters.
This will be instrumental in proving an analogue of Proposition~\ref{prop:bollrior} in the next section.

\begin{lemma}\label{lem:coupling}
Let $0<\pnew<p<1$ and $0<r<1$ be arbitrary. 
There exists $t=t(r,p,p_{new})>0$ such that 
for each (measurable) $A\subseteq \ballR(o,r)$ with Euclidean diameter at most $t$ and each $\lambda>0$ there
exist a $\mu = \mu_{\lambda, A}$ and a coupling of $(\Zcalb, \Zcalw)$ and $(\Zcalbtil, \Zcalwtil)$ satisfying (almost surely)

$$ \Zcalbtil\cap A \subseteq \Zcalb\cap A, \quad \Zcalw\cap A \subseteq \Zcalwtil\cap A, $$

\noindent
where $\Zcalbtil, \Zcalwtil$ are independent, homogeneous, Euclidean Poisson processes of intensities 
$\pnew \cdot \mu$ and $(1-\pnew)\cdot\mu$. 
\end{lemma}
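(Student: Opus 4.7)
The plan is to use a standard thinning construction, after observing that the hyperbolic intensity density $f_0(u) := 4/(1-\norm{u}^2)^2$ is nearly constant on any sufficiently small subregion of $\ballR(o,r)$. Indeed $f_0$ is continuous on the compact set $\cl{\ballR(o,r)}$, and the assumption $\pnew < p$ yields
$$ c := \frac{p(1-\pnew)}{\pnew(1-p)} > 1. $$
By uniform continuity of $f_0$ I can therefore choose $t = t(r,p,\pnew) > 0$ so small that, for every $A \subseteq \ballR(o,r)$ of Euclidean diameter at most $t$, one has $\sup_A f_0 \leq c \cdot \inf_A f_0$. Rearranging this inequality, the interval
$$ I(A,\lambda) := \left[\, \frac{1-p}{1-\pnew}\,\lambda\, \sup_A f_0,\ \frac{p}{\pnew}\,\lambda\, \inf_A f_0 \,\right] $$
is then nonempty, and I set $\mu = \mu_{\lambda,A}$ to be any element of it.

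With this $\mu$ the coupling is built inside $A$ by two independent thinnings, one per colour. On $A$ the process $\Zcalb$ has Lebesgue-intensity $p\lambda f_0(u) \geq \pnew \mu$, so I sample $\Zcalb \cap A$ first and independently retain each point $z$ with probability $\pnew\mu / (p\lambda f_0(z)) \leq 1$; the standard thinning identity for Poisson processes makes the retained set a Poisson process on $A$ of constant intensity $\pnew\mu$, which I declare to be $\Zcalbtil \cap A$. By construction $\Zcalbtil \cap A \subseteq \Zcalb \cap A$. Symmetrically, I first sample $\Zcalwtil \cap A$ (homogeneous of intensity $(1-\pnew)\mu$), then retain each $z$ with probability $(1-p)\lambda f_0(z)/((1-\pnew)\mu) \leq 1$; by thinning the retained set is Poisson on $A$ with intensity $(1-p)\lambda f_0(u)$, matching the intended marginal for $\Zcalw \cap A$, and $\Zcalw \cap A \subseteq \Zcalwtil \cap A$ holds by construction.

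Outside $A$ the four processes are extended independently of the above construction and of each other, using their prescribed marginal intensities: $\Zcalb, \Zcalw$ as independent hyperbolic Poisson processes on $\Dee \setminus A$ of intensities $p\lambda f_0$ and $(1-p)\lambda f_0$, and $\Zcalbtil, \Zcalwtil$ as independent homogeneous Euclidean Poisson processes on $\eR^2 \setminus A$ of intensities $\pnew\mu$ and $(1-\pnew)\mu$. Since Poisson processes on disjoint regions combine to a Poisson process with the union intensity, the assembled pairs $(\Zcalb,\Zcalw)$ and $(\Zcalbtil,\Zcalwtil)$ have exactly the prescribed marginal laws (including the mutual independence of the two colours within each pair), while the almost-sure containments on $A$ are preserved by the construction. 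The only real obstacle is the algebraic observation that $\pnew<p$ is precisely what forces $c>1$, which is in turn what allows a single constant intensity $\mu$ to sit simultaneously below the hyperbolic black density and above the hyperbolic white density on any region where $f_0$ varies by at most the factor $c$.
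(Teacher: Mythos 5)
Your proof is correct and follows essentially the same route as the paper: both arguments exploit uniform continuity of the density $f_0$ to make it nearly constant on sets of small diameter, and both choose a constant intensity $\mu$ sandwiched so that $\pnew\mu \le p\lambda f_0$ and $(1-\pnew)\mu \ge (1-p)\lambda f_0$ on $A$, which is possible precisely because $\pnew<p$. The only cosmetic difference is that you realise the monotone coupling by independent thinning while the paper uses a superposition of three independent Poisson processes; these are equivalent constructions.
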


To aid the reader's understanding, we emphasize that $\Zcalb$ is with respect to the hyperbolic intensity measure 
$u \mapsto \lambda \cdot p \cdot 1_{\Dee}(u) \cdot 4 (1-\norm{u}^2)^{-2}$ 
while $\Zcalbtil$ is with respect to the Euclidean intensity measure $u \mapsto \mu \cdot \pnew$, and similarly for 
$\Zcalw$ and $\Zcalwtil$.

Let us also remark that (provided $A$ is not a set of measure zero) we have $\mu_{\lambda,A} \to \infty$ as $\lambda\to\infty$.
This is because the expected number of points of $\Zcalwtil$ that fall in $A$ is at least the expected number
of points $\Zcalw$ that fall in $A$, giving $(1-\pnew) \cdot \mu \cdot \areaR(A) \geq (1-p) \cdot \lambda \cdot \areaH(A)$.

\begin{proof}
Let $\delta>0$ be small, to be specified appropriately later on in the proof.
Since $f(u) := 1_{\Dee}(u) \cdot 4 (1-\norm{u}^2)^{-2}$ is uniformly continuous on 
$\ballR(o,r)$ we can and do choose $t = t(r,p,\pnew)$ such that 
$|f(u)-f(v)| < \delta$ for all $u,v \in \ballR(o,r)$ with $\norm{u-v}<t$.
Now let $A \subseteq \ballR(0,r)$ be an arbitrary measurable set of Euclidean diameter $\leq t$.
We set $\mu := \lambda \cdot \inf_{u\in A} f(u)$.
Since $\Zcalb, \Zcalw$ are independent Poisson processes, and similarly for $\Zcalbtil, \Zcalwtil$, it 
suffices to construct a coupling of $\Zcalb$ with $\Zcalbtil$ and a coupling of $\Zcalw$ with $\Zcalwtil$ separately.

In order to construct the coupling of $\Zcalb, \Zcalbtil$ we first note that with our choice of $\mu$ we have 
$\lambda \cdot p \cdot f(u) \geq \mu \cdot \pnew$ for all $u \in A$. 
We let $\Pcal_0, \Pcal_1, \Pcal_2$ be independent, inhomogeneous Poisson processes
with intensities given by 

$$ \begin{array}{c} 
\varphi_0(u) := \min\left( \pnew \cdot \mu,\ p \cdot \lambda \cdot f(u) \right), \\[1ex]
%\quad \quad 
\varphi_1(u) :=   \pnew \cdot \mu - \varphi_0(u), \quad \quad %\\[1ex]
\varphi_2(u) := \ p \cdot \lambda \cdot f(u) - \varphi_0(u).
\end{array} $$

\noindent
As $\varphi_1$ is identically zero on $A$ we have

$$(\Pcal_0 \cup \Pcal_1) \cap A = \Pcal_0 \cap A \subseteq (\Pcal_0\cup\Pcal_2)\cap A \quad (\text{almost surely}). $$

\noindent
Since $\varphi_0 + \varphi_1 = \pnew \cdot \mu$ and $\varphi_0 + \varphi_2 = p \cdot \lambda \cdot f$, 
the superposition theorem for Poisson point 
processes (see e.g.~\cite[p. 20]{last2017lectures}) implies that $\Zcalb \isd \Pcal_0 \cup \Pcal_1$ and 
$\Zcalbtil \isd \Pcal_0 \cup \Pcal_2$. In other words, the pair $\Pcal_0\cup\Pcal_1,\Pcal_0\cup\Pcal_2$ provides a 
coupling of $\Zcalb, \Zcalbtil$ with the desired properties.

In order to construct the coupling of $\Zcalw, \Zcalwtil$ we remark that, for all $v \in A$, we have
$f(v) \leq \inf_{u\in A} f(u) + \delta \leq (1+\delta) \cdot \inf_{u\in A} f(u)$. 
Having chosen $\delta=\delta(p,\pnew)$ sufficiently small, we have, for every $v\in A$:

$$ \begin{array}{rcl} 
(1-p) \cdot \lambda \cdot f(v) 
& \leq &  (1+\delta) \cdot (1-p) \cdot \lambda \cdot \inf_{u\in A} f(u) \\
& \leq & (1-\pnew) \cdot \lambda \cdot \inf_{u\in A} f(u) \\ 
& = & (1-\pnew) \cdot \mu. 
\end{array} $$

\noindent
We can, therefore, construct the sought coupling of $\Zcalw, \Zcalwtil$ analogously to before.
\end{proof}

\subsection{Crossing rectangles with the Euclidean metric and hyperbolic intensity measure}

We are now ready to prove the following analogue of Proposition~\ref{prop:bollrior} 
for the hyperbolic intensity measure.
All mention of rectangles in the remainder of this paper will be with respect to the Euclidean metric. 
(In fact, it is not immediately clear what would be the best notion of a ``rectangle'' in hyperbolic geometry.)
In other words, for us a rectangle is a subset $A$ of the hyperbolic plane such that, when we display the hyperbolic plane
as the unit disk in the Euclidean plane using the Poicar\'e disk model, $A$ is also a Euclidean rectangle.
Or, put differently, a rectangle is a subset of the hyperbolic plane that {\em looks like} a Euclidean rectangle (that is not a 
square) in the Poincar\'e disk model.
We emphasize that in the definition of the events $\cross(R)$ we ask for an all black, continuous curve inside $R$
that connects the two shorter sides, where black refers to the colouring generated by the Voronoi tessellation using the
{\em Euclidean} metric.

\begin{lemma}\label{lem:cross}
For any fixed $p>1/2 $ and any fixed rectangle $R\subseteq \Dee$ we have

$$ \lim_{\lambda\to\infty}\Pee_{\lambda,p}(\cross(R))=1.  $$

\end{lemma}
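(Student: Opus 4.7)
The plan is to reduce the statement to the case of rectangles of sufficiently small Euclidean diameter, and then to transfer a Euclidean-PPP crossing to the hyperbolic setting by combining the coupling of Lemma~\ref{lem:coupling} with the local control of Lemma~\ref{lem:nearby} and the Euclidean crossing result of Proposition~\ref{prop:bollrior}.

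First, I would argue that it suffices to prove the statement for rectangles of small enough Euclidean diameter. Given an arbitrary fixed $R$, one can cover $R$ by a finite configuration of overlapping smaller sub-rectangles $R^{(1)},\ldots,R^{(N)}$ (some ``long'' horizontally, some ``long'' vertically) chosen so that the simultaneous occurrence of $\cross(R^{(i)})$ in the appropriate direction for every $i$ forces a black crossing of $R$; this is a standard gluing construction from planar percolation. A union bound then gives $\Pee_{\lambda,p}(\cross(R)) \geq 1 - \sum_i \Pee_{\lambda,p}(\cross(R^{(i)})^c)$, so the statement for $R$ reduces to the statement for each (sufficiently small) $R^{(i)}$.

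For the small case, fix $R \subseteq \ballR(o,r)$ with $r<1$. Choose $\pnew \in (1/2, p)$ and then $\delta > 0$ so small that $R_\delta \subseteq \ballR(o, r')$ for some $r'<1$ and $\diamR(R_\delta) \leq t(r', p, \pnew)$, where $t$ is the quantity from Lemma~\ref{lem:coupling}. Applying that lemma with $A := R_\delta$ yields a coupling of the hyperbolic marked process $(\Zcalb, \Zcalw)$ with a Euclidean marked process $(\Zcalbtil, \Zcalwtil)$ of parameters $\pnew$ and $\mu = \mu_{\lambda,A}$; moreover $\mu \to \infty$ as $\lambda\to\infty$ by the remark after Lemma~\ref{lem:coupling}. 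Consider three events: $E_1 := \nearby(R,\delta)$ for the hyperbolic process, $E_2$ the analogous event for the Euclidean process, and $E_3$ the occurrence of $\cross(R)$ for the Euclidean-Voronoi tessellation of $\Zcaltil$. By Lemma~\ref{lem:nearby}(ii) (using $\mu\to\infty$ for $E_2$), both $E_1$ and $E_2$ hold with probability tending to $1$; by Proposition~\ref{prop:bollrior} (with $\pnew>1/2$ and $\mu\to\infty$), so does $E_3$.

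The crux is to argue that on $E_1 \cap E_2 \cap E_3$ the hyperbolic process itself has a black crossing of $R$. For $u \in R$ black under $\Zcaltil$, let $\tilde z \in \Zcaltil$ be its Euclidean-nearest point; then $\tilde z \in \Zcalbtil$, and $E_2$ forces $\tilde z \in R_\delta = A$, so the coupling gives $\tilde z \in \Zcalb \cap A \subseteq \Zcal$. If $u$ were white under $\Zcal$, its Euclidean-nearest point $z^* \in \Zcal$ would lie in $\Zcalw$ and, by $E_1$, in $A$; the coupling then places $z^* \in \Zcalwtil \subseteq \Zcaltil$, and the strict inequality $\norm{u - z^*} < \norm{u - \tilde z}$ would contradict the choice of $\tilde z$. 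Hence the Euclidean-PPP black region inside $R$ is contained in the hyperbolic-PPP black region inside $R$, so a Euclidean crossing witnessed by $E_3$ is automatically a hyperbolic crossing, giving $\Pee_{\lambda,p}(\cross(R)) \geq \Pee(E_1 \cap E_2 \cap E_3) \to 1$. The main obstacle is really the reduction in the first step: one has to set up the tiling so that small sub-crossings genuinely combine into a crossing of $R$; once that is in place the remaining steps are essentially forced by the lemmas already in hand.
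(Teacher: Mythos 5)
Your proof is correct and follows essentially the same route as the paper's: reduce to small rectangles via a finite gluing of sub-rectangles, apply the coupling of Lemma~\ref{lem:coupling} on $R_\delta$, use the $\nearby$ events of Lemma~\ref{lem:nearby} (for both processes) to guarantee locality, and invoke Proposition~\ref{prop:bollrior} together with $\mu\to\infty$. The only difference is presentational: where you establish $\tilde B\subseteq B$ by a pointwise nearest-point argument, the paper deduces the same inclusion directly from the determinacy of the colourings under $\nearby(R,\delta)$ and the monotonicity of the Voronoi colouring in the pair $(\Zcalb,\Zcalw)$.
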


\begin{figure}[!htb]
\center{\includegraphics[width=.8\textwidth ]{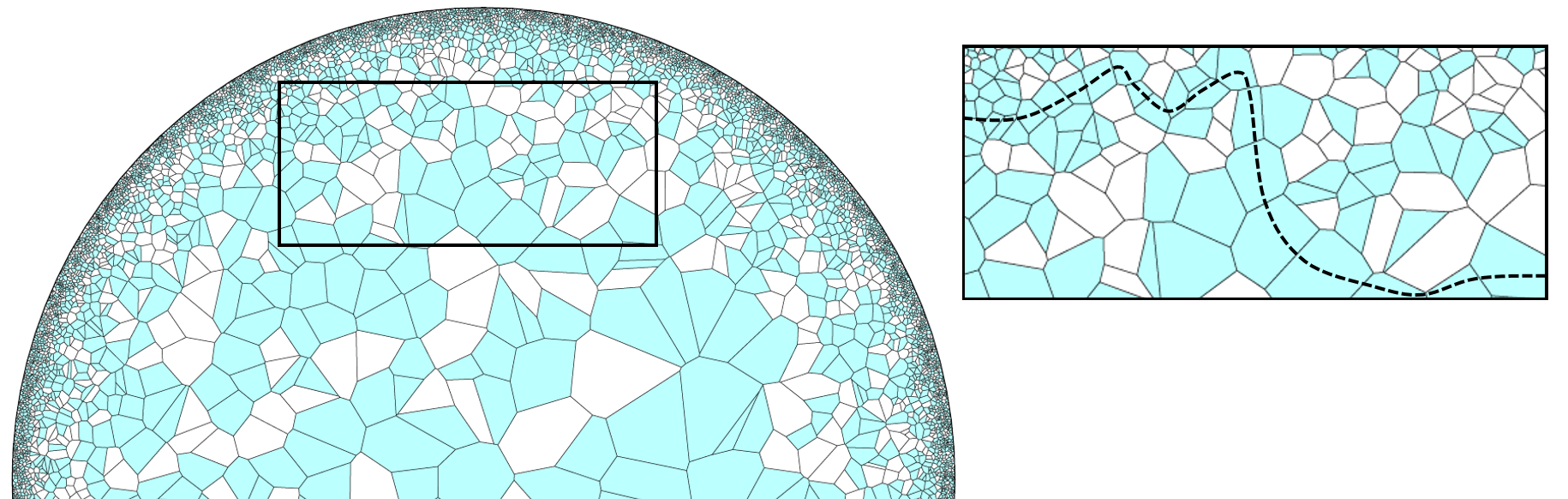} }
\caption{\label{fig:rectperc} The Euclidean  Voronoi cells under hyperbolic intensity measure 
and a rectangle $R$ for which $\cross(R)$ occurs. ($p=.6, \lambda=20$)%
}
\end{figure}

\begin{proof}
We first point out that it is sufficient to show that for each $0<r<1$ the statement holds 
for rectangles contained in $\ballR(o,r)$, with (Euclidean) diameter at most some small constant $\ell=\ell(r,p)$ (to be
chosen appropriately in the course of the proof). 
This is because for any fixed rectangle $R \subseteq \Dee$ and $r=r(R)$ sufficiently close to one and any $\ell > 0$, we can 
place a finite number of rectangles $R_1, \dots, R_m$, each of Euclidean diameter at most $\ell$ and contained in 
$\ballR(o,r)$, such that the event $\cross(R_1) \cap \dots \cap \cross(R_m)$ implies the event $\cross(R)$.
(See e.g.~Figure~\ref{fig:hrectangles}.)

\begin{figure}[!htb]
\center{\includegraphics[width=.9\textwidth ]{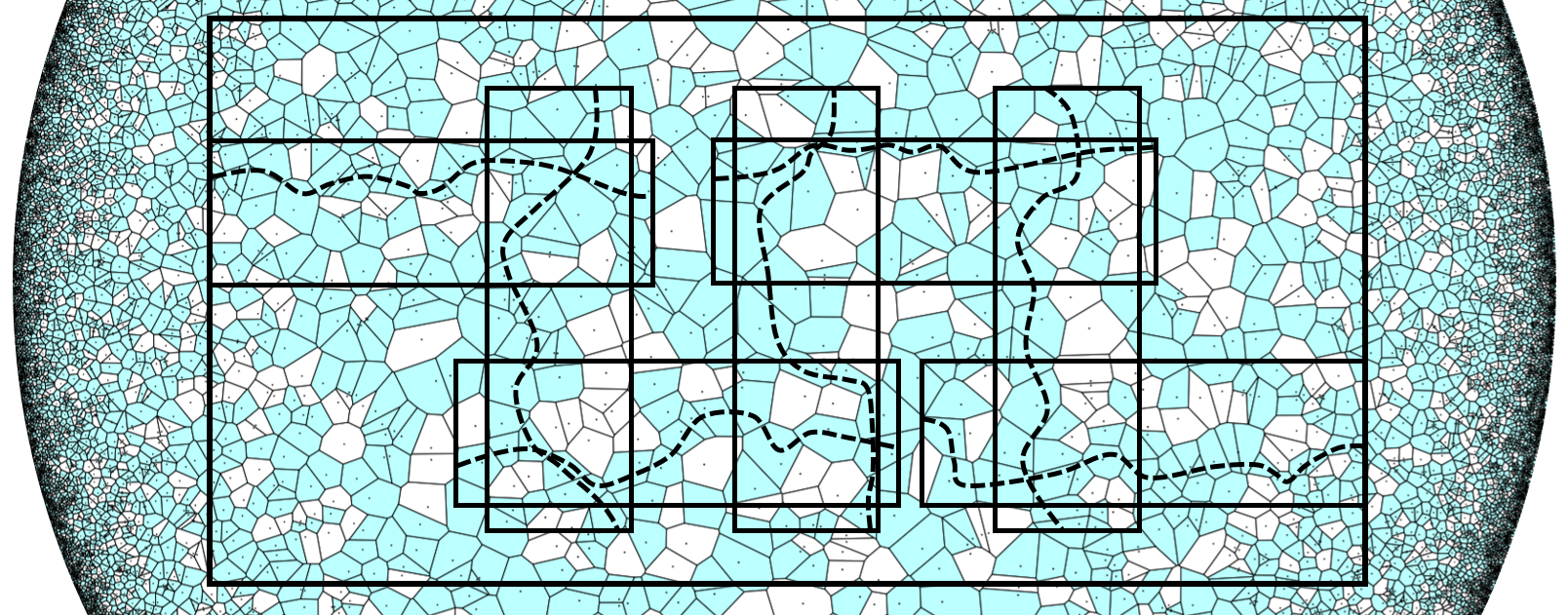}  }
\caption{\label{fig:hrectangles}  Crossing a big rectangle using smaller ones.}
\end{figure}

\noindent
So $\Pee_{\lambda,p}(\cross(R) ) \geq 1 - \sum_{i=1}^m \Pee_{\lambda,p}(\cross(R_i)^c )$ and it is enough 
to show that $\Pee_{\lambda,p}(\cross(R_i) ) \to 1$ as $\lambda\to\infty$, for all $i=1,\dots,m$.

We thus fix $0<r<1$ and we let $R \subseteq \ballR(o,r)$ be an arbitrary rectangle of (Euclidean) diameter 
at most $\ell := t/3$ where $t = t((1+r)/2,p,(1/2+p)/2)$ is as provided by Lemma~\ref{lem:coupling}.
For convenience, we will write $\rho := (1+r)/2, \pnew = (1/2+p)/2$ from now on.
We apply Lemma~\ref{lem:coupling} to $R_\delta$ where $\delta := \min(t/3,\rho-r)$ is such that 
$\diamR( R_\delta ) \leq t$ and $R_\delta \subseteq \ballR(o,\rho)$ and we 
let $\mu=\mu_{\lambda,R_\delta}$ and $\Zcalbtil, \Zcalwtil$ be as provided by that lemma.
We denote by $B \subseteq R$ the (random) subset of $R$ that is coloured black in the Voronoi tessellation 
for $\Zcalb, \Zcalw$ and we denote by $\tilde{B}\subseteq R$ the black subset of  $R$ under the Voronoi tessellation
for $\Zcalbtil, \Zcalwtil$. (In both cases the Voronoi tessellation is with respect to the Euclidean metric.) 

Suppose for a moment that $\nearby(R,\delta)$ holds both for $\Zcal = \Zcalb \cup \Zcalw$
and for $\Zcaltil = \Zcalbtil \cup \Zcalwtil$.
In that case, by the remark immediately following Lemma~\ref{lem:nearby},  
$B$ is completely determined by $(\Zcalb\cap R_\delta, \Zcalw\cap R_\delta)$ and $\tilde{B}$ is completely determined by 
$(\Zcalbtil\cap R_\delta, \Zcalwtil\cap R_\delta)$.
Since under our coupling $\Zcalbtil\cap R_\delta \subseteq \Zcalb\cap R_\delta$ and 
$\Zcalwtil \cap R_\delta \supseteq \Zcalw\cap R_\delta$, this would give that $B \supseteq \tilde{B}$. 
In particular,  if $\tilde{B}$ contains a crossing of $R$ then so does $B$ (still under the assumption that 
$\nearby(R,\delta)$ happens both for $\Zcal$ and $\Zcaltil$).

We may conclude that
$$ \Pee_{\lambda,p}(\cross(R)) \geq \tilde{\Pee}_{\mu,\pnew}(\cross(R)) 
- \Pee_{\lambda,p}(\nearby(R,\delta)^c ) - \tilde{\Pee}_{\mu,\pnew}(\nearby(R,\delta)^c).
$$

By Proposition~\ref{prop:bollrior}, and the remark following Lemma~\ref{lem:coupling} 
(stating that $\mu\to\infty$ as $\lambda\to\infty$),
we have $\lim_{\lambda\to\infty} \tilde{\Pee}_{\mu,\pnew}(\cross(R)) = 1$.
By part {\bf(ii)} of Lemma~\ref{lem:nearby}, we have $\displaystyle\lim_{\lambda\to\infty} \Pee_{\lambda,p}(\nearby(R,\delta) ) =
\lim_{\lambda\to\infty} \tilde{\Pee}_{\mu,\pnew}(\nearby(R,\delta) ) = 1$.
The result follows.
\end{proof}

\subsection{Proof of our main result}

For ease of notation, we will say that $z, z' \in \Zcal$ are {\em adjacent} if the corresponding
Voronoi cells $\cellH(z), \cellH(z')$ touch. In light of Lemma~\ref{lem:adjacency}, this 
is equivalent to $\cellR(z), \cellR(z')$ touching, up to an event of probability zero.
A path is of course a (finite or infinite) sequence of distinct points $z_1, z_2, \dots \in \Zcal$ such that $z_i, z_{i+1}$ are adjacent
for each $i$.

\begin{proofof}{Theorem~\ref{thm:main}} 
% Since Benjamini and Schramm~\cite{benjamini2001percolation} have already shown that $p_c(\lambda)<1/2$ for all
% $\lambda$, it suffices to show that for every $p<1/2$ there exists a $\lambda_0 = \lambda_0(p)$ such that 
% $\Pee_{\lambda,p}(\text{$\exists$ infinite black component}) = 0$ for all $\lambda > \lambda_0$.
% Equivalently, switching the roles of black and white, 
As explained in the introduction, it suffices to show that for every $p>1/2$ we have
$\Pee_{\lambda,p}( \text{$\exists$ infinite white component} ) = 0$ for all sufficiently large $\lambda$. %
In order to show this, we will define a dependent percolation model on the $(7,7,7)$-triangulation $\Tcal$ of $\Haa^2$. 
We turn $\Tcal$ into a graph by declaring two triangles adjacent if and only if they meet in at least one point. 
So all vertices of the graph have degree 15.
The state (open/closed) of each triangle $T\in\Tcal$ will be determined by the Poisson-Voronoi tessellation, in such a 
way that the existence of an infinite white component in the Poisson-Voronoi tessellation implies the existence
of an infinite open cluster in the dependent percolation model.
For convenience, we assume without loss of generality that one of the triangles $T_o \in \Tcal$ is centred at the origin. 

We start by defining the event $\closed(T_o)$ that $T_o$ is closed. We fix six thin rectangles $R_1, \dots, R_6$ 
as pictured in Figure~\ref{fig:order7_rect}. 
The key features of this placement are that each rectangle is at least some positive distance away from
both $T_o$ and $\partial\Dee$, and that the event $\cross(R_1)\cap\dots\cap \cross(R_6)$ will imply the existence
of a black, continuous, closed curve that separates $T_o$ from $\partial\Dee$.
A subtle point here is that, because of the way we've defined the events $\cross(.)$, this black curve separating 
$T_o$ from $\partial\Dee$ is with respect to the colouring of $\Dee$ generated
by the Voronoi tessellation under the Euclidean metric.

\begin{figure}[!htb]
\center{ \includegraphics[width=.4\textwidth ]{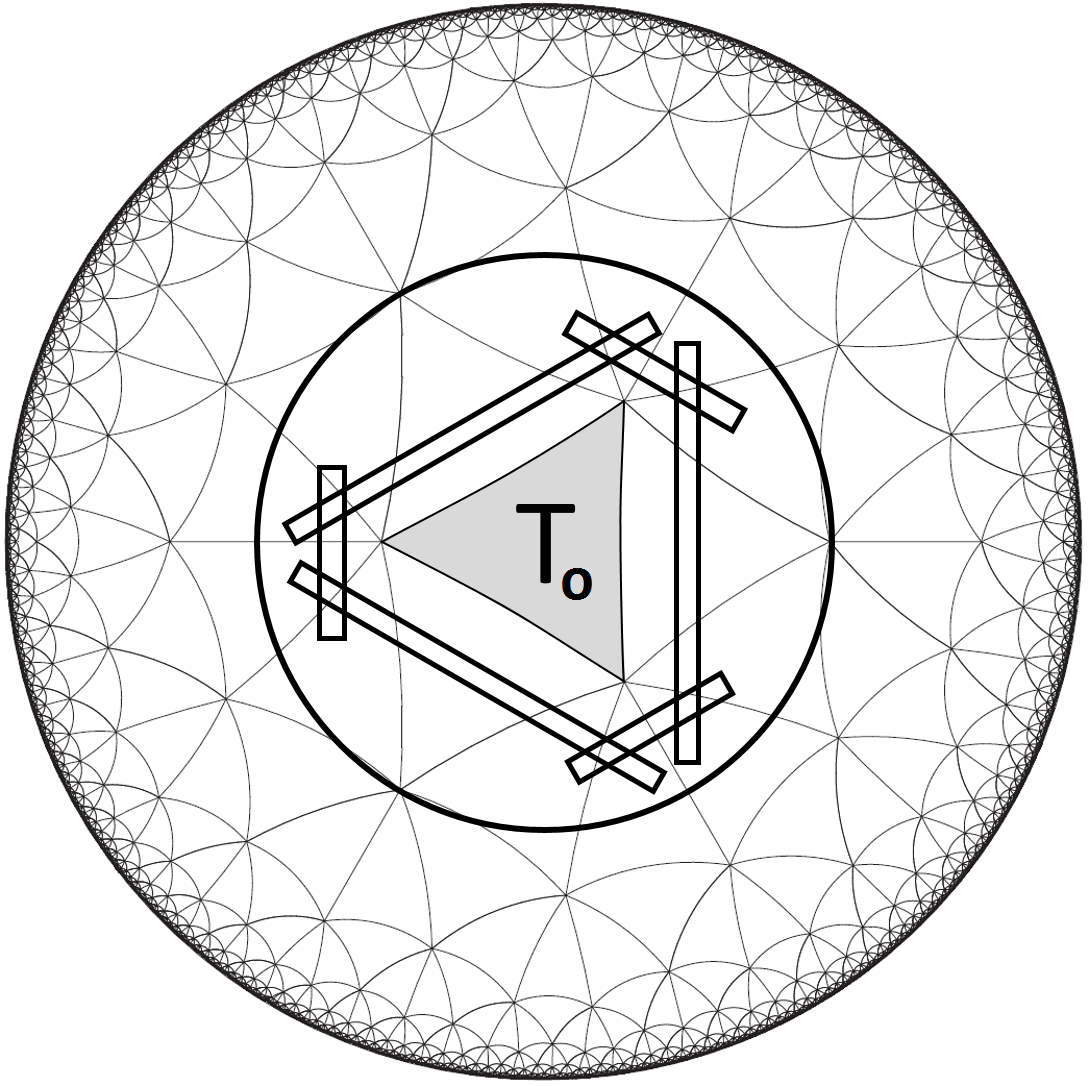} }
\caption{\label{fig:order7_rect} Six rectangles inside $\ballR(o,r)$, surrounding $T_o$.}
\end{figure}

We can and do pick an $0<r<1$ and  $0<\delta<1-r$ such that $R_1, \dots, R_6 \subseteq \ballR(o,r)$ 
and each $R_i$ has Euclidean distance $>\delta$ to $T_o$.
Now we define

$$ \closed(T_o) := \cross(R_1) \cap \dots \cap \cross(R_6) \cap \nearby( \ballR(o,r), \delta). $$

For each triangle $T \in \Tcal$ in the tiling, we fix a $\Haa^2$-isometry $\varphi$ that maps $T$ to $T_o$ and 
we define $\closed(T)$ as the event that $\closed(T_o)$ holds with respect to $\varphi[\Zcal]$.
Of course $T \in \Tcal$ is declared open if $\closed(T)$ does not hold.
And, since $\varphi[\Zcal]$ has the same distribution as $\Zcal$, we have

\begin{equation}\label{eq:lim} 
\Pee_{\lambda,p}( \closed(T) ) = \Pee_{\lambda,p}( \closed(T_o) ) \xrightarrow[\lambda\to\infty]{} 1, 
\end{equation}

\noindent
the limit holding because of Lemmas~\ref{lem:nearby} and~\ref{lem:cross}.
Moreover, by part {\bf(i)} of Lemma~\ref{lem:nearby} and the remark following that lemma, the event 
$\closed(T_o)$ depends only on the points of $\Zcal$ that fall inside $\ballR(o,r+\delta)$.
Of course, $\ballR(o,r+\delta)$ is also a hyperbolic disk $\ballH( o, \rho )$ centred at the origin for some 
finite $\rho = \rho(r+\delta)$. 
%(The precise value does not matter, but for reader who value details $\rho = 
%2\arcsinh\left( (r+\delta)\cdot(1-(r+\delta)^2)^{-1/2} \right)$.) 
By construction of the events $\closed(T)$, for any other $T \in \Tcal$, the event $\closed(T)$ 
also depends only on the part of the Poisson process $\Zcal$ inside a hyperbolic disk of radius $\rho$ around the centre of $T$.
The percolation model we've defined on $\Tcal$ is, therefore, $k$-independent where 
$k$ is the number of triangles of $\Tcal$ whose centre has hyperbolic distance $<2\rho$ to the origin.
Combining Lemma~\ref{lem:kindept} with~\eqref{eq:lim} we find that 
for sufficiently large $\lambda$:

$$ \Pee_{\lambda,p}( \text{$\exists$ an infinite open cluster in $\Tcal$} ) = 0. $$

It remains to see how this implies that the probability that an infinite white cluster exists in the 
Voronoi tessellation is also zero for large values of the intensity $\lambda$. 
A key observation is the following.

\vspace{1ex}

\begin{quote}
{\bf Claim.} %%
{\em %
Almost surely, for every $T \in \Tcal$, if $\closed(T)$ holds then there does not exist an infinite white path $z_1, z_2,\dots$
such that $\bigcup_{i=1}^\infty \cellH(z_i)$ intersects $T$.
} %% end \em

\vspace{1ex}

\begin{proofof}{the claim} 
We first point out that it suffices to prove the claim for $T=T_o$.
This is because if $\varphi : \Dee \to \Dee$ is the $\Haa^2$-isometry mapping $T$ to $T_o$ used in the definition of
$\closed(T)$, then $\varphi[ \cellH(z;\Zcal) ] = \cellH(\varphi(z),\varphi[\Zcal])$ for all $z\in\Zcal$.
So $z,z'$ are adjacent in the Voronoi tessellation for $\Zcal$ if and only if $\varphi(z), \varphi(z')$ are adjacent in the 
Voronoi tessellation for $\varphi[\Zcal]$ and $\cellH(z;\Zcal)$ intersects $T$ if and only if
$\cellH(\varphi(z);\varphi[\Zcal])$ intersects $T_o$.

In the remainder, we shall thus be taking $T=T_o$.
The occurrence of $\cross(R_1)\cap\dots\cap\cross(R_6)$ implies that in the 
colouring generated by the Voronoi tessellation using the {\em Euclidean} metric there is an all black, continuous, closed curve $\gamma
\subseteq R_1\cup\dots\cup R_6$ that separates $T_o$ from $\partial \ballR(0,r)$. 

Suppose there is an infinite white path $z_1, z_2,\dots$ as in the statement of the claim, and let $i$ be such that
$\cellH(z_i) \cap T_o \neq \emptyset$.
Since $\ballR(o,r)$ contains finitely many points almost surely, there will be some $j>i$ such that
$z_j \not\in \ballR(o,r)$.
The occurrence of $\nearby( \ballR(o,r), \delta )$ implies that $z_i$ is within Euclidean distance $\delta$ of $T_o$.  
In particular, the black curve $\gamma$ separates $z_i$ and $z_j$. 
Since $\cellR(z_i) \cup \cellR(z_{i+1}) \cup \dots \cup \cellR(z_j)$ contains a continuous curve between $z_i$ and $z_j$,
at least one of the Euclidean cells $\cellR(z_i), \cellR(z_{i+1}), \dots, \cellR(z_j)$ intersects the black curve $\gamma$. 
But that means one of $z_i, z_{i+1}, \dots, z_j$ must be black, contradicting our choice of $z_1, z_2, \dots$.
%(Side remark: here we needed to consider a curve between $z_i$ and $z_j$ inside the Voronoi cells 
%$\cellR(z_i), \cellR(z_{i+1}), \dots, \cellR(z_j)$ with respect to the Euclidean metric and not the hyperbolic metric. 
%The curve $\gamma$ is only guaranteed to be black in the colouring generated by the Euclidean Voronoi tiling for $\Zcal$.)
\end{proofof}
\end{quote}

To conclude the proof we remark that if there were to exist an infinite white path $z_1, z_2, \dots$
in the hyperbolic Voronoi diagram, then the set of triangles of $\Tcal$ that $\cellH(z_1) \cup \cellH(z_2) \cup \dots$ intersects
would have to be part of an infinite open cluster in $\Tcal$.
\end{proofof}

\subsection*{Acknowledgement} 

We thank the anonymous referees for their useful comments.

\bibliographystyle{amsplain}
\bibliography{voronoibib}

\end{document}